\numberwithin{equation}{section}
\numberwithin{figure}{section}
\newtheorem{theorem}{Theorem}[section]
\newtheorem{lemma}[theorem]{Lemma}\newtheorem{corollary}[theorem]{Corollary}\newtheorem{proposition}[theorem]{Proposition}\newtheorem{definition}[theorem]{Definition}\newtheorem{remark}[theorem]{Remark}\numberwithin{equation}{section}
\theoremstyle{plain}
\newtheorem*{remark*}{Remark}
\begin{document}

\title{On The Spectral Zeta Function Of Second Order Semiregular Non-Commutative
Harmonic Oscillators}
\author{Marcello Malagutti}
\address{Department of Mathematics, University of Bologna, Piazza di Porta
S.Do\-na\-to 5, 40126 Bologna, ITALY}
\email{marcello.malagutti2@unibo.it}
\thanks{{\bf 2020 Mathematics Subject Classification.} Primary 11M41; Secondary 81Q10, 11M35, 46L60, 46N50}
\thanks{{\it Key words and phrases: spectral zeta functions; Riemann's zeta function; harmonic oscillator; non-commutative harmonic oscillators}}

\maketitle

\begin{abstract}
In this paper we give a meromorphic continuation of the spectral zeta
function for \textit{semiregular} Non-Commutative Harmonic Oscillators
(NCHO). By ``semiregular system'' we mean systems with terms with
degree of homogeneity scaling by $1$ in their asymptotic expansion.
As an application of our results, we first compute the meromorphic
continuation of the Jaynes-Cummings (JC) model spectral zeta function.
Then we compute the spectral zeta function of the JC generalization
to a $3$-level atom in a cavity. For both of them we show that it
has only one pole in $1$.
\end{abstract}

\tableofcontents{}

\global\long\def\theequation{\thesection.\arabic{equation}}%


\section{Introduction}

One of the most important observables of the spectrum of an elliptic
operator is the spectral zeta function. For a complex Hilbert space
$H$ and a densely defined linear operator $P:H\rightarrow H$, we
denote the set of the eigenvalues (repeated by multiplicity) of $P$
by $\mathrm{Spec}\,P$. When $\mathrm{Spec}\,P$ is discrete we can
define the spectral zeta function of $P$ as
\[
\zeta_{P}(s):={\displaystyle \sum_{\lambda\in\mathrm{Spec}\,P}}\lambda^{-s},
\]
for any given complex number $s$ for which it makes sense. In particular,
if $P$ is an elliptic, selfadjoint and positive global pseudodifferential
operator of order $\mu>0$ on $\mathbb{R}^{n}$, then $s\mapsto\zeta_{P}(s)$
is holomorphic for $\mathrm{Re}s>2n/\mu$ since the defining series
is absolutely convergent (see Corollary 4.4.4. in \cite{P1:2010}).
For instance, if we denote by $P=\frac{x^{2}-\partial_{x}^{2}}{2}$
the harmonic oscillator defined as the maximal operator in $L^{2}(\mathbb{R})$,
then $\mathrm{Spec}\,P=\{k+1/2;\:k\in\mathbb{Z}_{+}\}$ with multiplicity
$1$, and
\[
\zeta_{P}(s)={\displaystyle \sum_{k\geq0}(k+1/2)^{-s}=(2^{s}-1)\zeta(s)},
\]
where $\zeta(s)$ denotes the Riemann zeta function. Note that $\zeta_{P}$
is holomorphic for $\mathrm{Re}(s)>1$, and has a meromorphic continuation
to the whole complex plane. Furthermore, $\zeta_{P}$ has the only
pole at $s=1$, and we have $\zeta_{P}(s)=0$ for $s=-2k$, $k\in\mathbb{Z}_{+}$
which are, thus, called trivial zeros. Moreover, the spectral zeta
function entangles information about the spectrum of $P$ in its analytical
properties. For instance, the residues of the zeta function at its
poles gives the coefficients of the Weyl Law for $P$ by the Ikehara
Tauberian theorem (see Section 14 of Shubin \cite{SH:2001}. See also
Proposition (IV.6) in \cite{HR:1980} and the references in Ivrii
\cite{Iv:2016}).

The notion of spectral zeta function was introduced for the first
time for the Laplacian on a two-dimensional Euclidean domains $\Omega$
by Carleman \cite{C:1934} who studied the Dirichlet-type series
\begin{equation}
{\displaystyle \sum_{\lambda_{j}\in\mathrm{Spec}\,\Delta}}\frac{\phi_{\lambda_{j}}(x_{1})\phi_{\lambda_{j}}(x_{2})}{\lambda_{j}^{s}},\;x_{1},x_{2}\in\Omega\label{eq:DirichletSeries}
\end{equation}
where $\phi_{\lambda_{j}}$ is the eigenfunction of $\Delta$ associated
to the eigenvalue $\lambda_{j}$. Later, in the case of a bounded
Euclidean domain $V$ of arbitrary dimension $N,$ Minakshisundaram
\cite{M:1949} showed through a method different from Carleman's that
(\ref{eq:DirichletSeries}) is an entire function of $s$ with zeros
at negative integers and that
\[
{\displaystyle \sum_{\lambda_{j}\in\mathrm{Spec}\,\Delta}}\frac{\phi_{\lambda_{j}}(x_{1})^{2}}{\lambda_{j}^{s}}
\]
can be continued as a meromorphic function of $s$ with a unique simple
pole at $N/2$ and negative integer zeros. Next, the analytic continuation
of the spectral zeta function was studied by Minakshisundaram and
Pleijel \cite{MP:1949} for the Laplacian on a general compact manifold
by a method that is a generalization of Carleman's. Seeley \cite{See:1968}
studied the spectral zeta function of an elliptic $\psi$do on a compact
manifold without boundary through the trace of complex powers of $\psi$dos,
furthermore giving the value of the zeta function at $0$. 

Many different techniques have been used to obtain properties of the
spectral zeta function. Duistermaat and Guillemin \cite{Du-Gu:1975}
(see also, \cite{G:1994} and the references in Hormander \cite{Ho:1968})
studied systematically the spectral zeta function of $\psi$dos on
compact bounderyless manifolds basing their approach on the construction
of a parametrix for the wave equation. Robert \cite{R1:1978} (see
also Aramaki \cite{A:1989}) extended meromorphically the spectral
zeta function of an elliptic $\psi$do on $\mathbb{R}^{n}$ to the
whole complex plane with simple poles that he computed along with
the corresponding residues. He generalized to the global setting the
techniques by Seeley to construct the parametrix of the resolvent
by complex powers.

An import distinction to show the relevance of the results in this
paper is the one between \textit{regular} and \textit{semiregular}
symbols. Since the natural homogeneity of the Poisson bracket of homogeneous
symbols is the sum of the orders minus $2$, it is natural in the
global calculus to call \textquotedblleft regular\textquotedblright{}
those symbols whose asymptotic expansion is made of homogeneous symbols
for which the $j$-th term has order $\mu-2j$ where $\mu$ is the
order of the principal term. We will call \textquotedblleft semiregular\textquotedblright{}
those symbol whose $j$-th term in the asymptotic expansion has order
$\mu-j$. This is indeed parallel to the use of \textquotedblleft semiregular\textquotedblright{}
appearing in the paper by Boutet De Monvel \cite{BdM:1974} on the
hypoellipticity of the $\overline{\partial}$ operator. For a semiregular
system of order $\mu$ we will call \textit{semiprincipal symbol}
the term of degree $\mu-1$ in the asymptotic expansion of its symbol
while we call \textit{principal symbol} the one of order $\mu$.

Moreover, following the discussion by \cite{P1:2010}, \cite{P2:2006},
\cite{PW1:2001}, and \cite{PW2:2003}, we call second order regular
Non-Commutative Harmonic Oscillators (NCHOs) the class of the \textit{regular}
global partial differential systems of second order with polynomial
coefficients. From now on we will omit the expression ``second order''
since all the NCHOs considered will be of second order.

Ichinose and Wakayama \cite{IW:2007} obtained a meromorphic continuation
of the spectral zeta function of a subclass of regular NCHOs and determined
some of its special values. In addition, they showed that such a spectral
zeta function has only a simple pole at $1$ and that the sequence
of its trivial zeros coincides with the one of the Riemann zeta function,
the non-positive even integers. Their approach is based on the Mellin
transform of the heat-semigroup of the operator in the approximation
given by a parametrix which they computed \textit{directly}, without
using the one for the resolvent, obtaining its asymptotic expansion
(see (15) and (16) in their paper). Later, Parmeggiani \cite{P1:2010}
generalized that approach to obtain the meromorphic continuation of
the spectral zeta function of all the regular NCHOs. Nevertheless,
while gaining in generality unfortunately his result did not explicitly
locate the trivial zeros of the continuation of the spectral zeta
function as could Ichinose and Wakayama.

Ichinose and Wakayama's and Parmeggiani's papers deal with \textit{regular}
systems. Regarding the \textit{semiregular} systems, Sugiyama explored
in \cite{Su:2018} the Hurwitz-type spectral zeta function for the
quantum Rabi model (describing the interaction of light and matter
of a two-level atom coupled with a single quantized photon of the
electromagnetic field, see the seminal papers \cite{Ra1:1936} and
\cite{Ra2:1937} by Rabi, see also \cite{Br:2014} by Braak).

In this paper we study the properties of the spectral zeta function
associated with a positive elliptic \textit{semiregular positive partial
differential systems} with polynomial coefficients, including \textit{also}
models of semiregular NCHOs in the class of the Semiregular Metric
Globally Elliptic Systems (SMGES) as those introduced in Section 3
of Malagutti and Parmeggiani \cite{MaPa:2022}. The class of the SMGES
is given by those matrix-valued symbols with a scalar principal part
(that is the ``metric'' part) and a smoothly diagonalizable semiprincipal
part. This class contains models relevant to Quantum Optics, such
as the Jaynes-Cummings model (which describes the interaction between
an atom and the electromagnetic field in a cavity and can be derived
as an approximation of Rabi's by rotating waves approximation provided
that the coupling strength is sufficiently weak, see \cite{SK:1963}
and the seminal paper \cite{JC:1963} by Jaynes and Cummings). Here
we follow the construction of the zeta function provided by Ichinose
and Wakayama, in analogy to the approach by Parmeggiani in Theorem
7.2.1 of \cite{P1:2010}.

We will prove a result about the continuation of the spectral zeta
function $\zeta_{A^{{\rm w}}}$ which turns out to be a meromorphic
function whose poles are real and accumulate at $-\infty$. Namely,
we will give the continuation as a linear combination of the meromorphic
functions $s\mapsto\frac{1}{s-(n-j)+h/2}$ , $j\geq0$ and $h=0$, $1$,
modulo a function that is holomorphic on a complex half-plan. Notice
that indeed our extension can have poles in all the negative semi-integers,
unlike the results in \cite{IW:2007}, \cite{P1:2010} and \cite{Su:2018}
where the poles are all positive. The meromorphic continuation is
obtained by following the approach of Theorem 7.2.1 in \cite{P1:2010}
where the  parametrix approximation $U_{A}(t)$ of the heat-semigroup
$e^{-tA^{{\rm w}}}$is used. More precisely, by the Mellin transform
we can write $\zeta_{A^{{\rm w}}}$ as $s\mapsto\frac{1}{\Gamma(s)}\int_{0}^{+\infty}t^{s-1}\mathrm{Tr}\:e^{-tA^{{\rm w}}}\:dt$
for $\mathrm{Re}s>2n/2=n$ and, at this point, the asymptotic expansion
${\displaystyle \sum_{j\geq0}}b_{-j}(t)$ (in the sense of Remark
6.1.5 at p. 83 of \cite{P1:2010}) of $U_{A}(t)$ with $t\in\overline{\mathbb{R}}_{+}$
becomes crucial. In fact, the approximation of $s\mapsto\frac{1}{\Gamma(s)}\int_{0}^{+\infty}t^{s-1}\mathrm{Tr}\:e^{-tA^{{\rm w}}}\:dt$
by $s\mapsto\frac{1}{\Gamma(s)}\int_{0}^{+\infty}t^{s-1}\mathrm{Tr}\:U_{A}(t)\:dt$
leads to the study of integrals of the form 
\begin{equation}
(2\pi)^{-n}\int_{\mathbb{R}^{2n}}\chi(X)\mathsf{Tr}\left(b_{-2j-h}(t,X)\right)\,dX,\;j\in\mathbb{N},\,h=0,1,\label{eq:Introd-coeff}
\end{equation}
where $\chi$ is a chosen excision function and $\mathsf{Tr}$ is
the classical matrix trace. In fact the computation of (\ref{eq:Introd-coeff})
will give the coefficients of the linear combination of the aforementioned
meromorphic functions. These coefficients will contribute to determine
the residues and zeros of the spectral zeta function. Now one needs
to go through a Taylor expansion argument as the time variable $t\rightarrow0+$
of the terms arising from the study of $\mathrm{Tr}\:e^{-tA^{{\rm w}}}-{\displaystyle \sum_{j=0}^{\nu}}{\displaystyle \,\sum_{h=0}^{1}}\mathrm{Tr}\:B_{-2j-h}(t)$
(where $B_{-k}$ with principal symbol $b_{-k}$).
(The behavior of $e^{-tA^{{\rm w}}}$ as $t\rightarrow+\infty$ does
not affect the result.)

This is a delicate argument since the behaviour of the coefficients
of the linear combination of the above meromorphic functions must
be controlled as $t\rightarrow0+$.

The plan of the paper is the following. First of all, the notation
adopted will be introduced in Section \ref{sec:Parabolic-calculus}
along with the parabolic $\psi$differential calculus needed to define
the heat-semigroup parametrix which will be constructed directly in
Section \ref{sec:Heat-semigroup-parametrix} by computing the terms
of its asymptotic expansion through the solution of eikonal and transport
equations. After that, in Section \ref{sec:Time-decrease-property},
we will control the behaviour of the coefficients. We will give the
proof of our theorem in Section \ref{sec:Meromorphic-continuation-of-zeta}.
Actually, in Section \ref{sec:Meromorphic-continuation-of-zeta} we
will also obtain a meromorphic continuation for the Hurwitz spectral
zeta function $\zeta_{A^{\mathrm{w}}+\tau I}$ for all $\tau\geq0$.
Finally, in Section \ref{sec:Example} by using our results in this
paper we will compute the meromorphic continuation of the spectral
zeta function for the Hamiltonians of Jaynes-Cummings and its generalization
to a $3$-level atom in one cavity. For these Hamiltonians we will
show that the meromorphic continuation has only a simple pole at $s=1$
and no other (even if, recall, the general formula allows all the
negative semi-integer as poles).

\section{\label{sec:Parabolic-calculus}Parabolic calculus}

In this section, similarly to what is done by Parenti and Parmeggiani
in \cite{PP:2004} (see also Section 6.1 of \cite{P1:2010}), we will
introduce a class of symbols suitable for the construction of a pseudodifferential
approximation of $e^{-tA^{\mathrm{w}}}$. Let us recall $\overline{\mathbb{R}}_{+}=[0,+\infty)$.

\begin{definition}

Let $r\in\mathbb{R}$. By $S(\mu,r)$ we denote the set of all smooth
maps $b:\,\overline{\mathbb{R}}_{+}\times\mathbb{R}^{n}\times\mathbb{R}^{n}\longrightarrow M_{N}$
satisfying the following estimates: for any given $\alpha\in\mathbb{Z}_{+}^{2n}$
and any given $p$, $j\in$$\mathbb{Z}_{+}$there exists $C>0$ such
that
\begin{equation}
\sup\left|t^{p}(\frac{d}{dt})^{j}\partial_{X}^{\alpha}b(t,X)\right|\leq cm(X)^{r-|a|+(j-p)\mu}.
\end{equation}
For $b\in S(\mu,r)$ we then consider the pseudodifferential operator
\[
b^{\mathrm{w}}(t,x,D)u(x)=(2\pi)^{-n}\iint e^{i(x-y,\xi)}b(t,\frac{x+y}{2},\xi)u(y)dyd\xi,\,u\in\mathscr{S}(\mathbb{R}^{n},\mathbb{C}^{N}),
\]
 and we shall say that $B\in\mathrm{OP}S(\mu,r)$ if $B=b^{\mathrm{w}}(t,x,D)+R$,
where $R$ is smoothing. In this setting, a smoothing operator $R$
is any \textbf{continuous} map
\[
R:\,\mathscr{S}'(\mathbb{R}^{n};\mathbb{C}^{N})\longrightarrow\mathscr{S}(\overline{\mathbb{R}}_{+};\mathscr{S}(\mathbb{R}^{n};\mathbb{C}^{N})).
\]

\end{definition}

Then we introduce the ``classical operators\textquotedblright : in
this case the key is to take in account the correct homogeneity properties.
The basic example to keep in mind is the matrix $e^{-ta_{\mu}(x,\xi)}$
. 

\begin{definition}

We say that the operator $B\in\mathrm{OP}S(\mu,r)$, $B=b^{\mathrm{w}}+R$
is \textbf{classical}, and write $B\in\mathrm{OP}S_{\mathrm{cl}}(\mu,r)$
, if there exists a sequence of functions $b_{r-2j}=b_{r-2j}(t,X),$$\,j\geq0,t\geq0$
and $X\neq0$, such that:
\begin{enumerate}
\item One has the homogeneity
\begin{equation}
b_{r-2j}(t,\tau X)=\tau^{r-2j}b_{r-2j}(\tau^{\mu}t,X),\:\forall\tau>0,\,\forall j\geq0;\label{eq:Homogeneity}
\end{equation}
\item The function
\[
\mathbb{R}^{2n}\setminus\{0\}\ni X\longmapsto b_{r-2j}(\cdot,X)\in\mathscr{S}(\overline{\mathbb{R}}_{+},M_{N}),
\]
is smooth for all $j\geq0$;
\item For all $\nu\geq1$
\begin{equation}
b(t,X)-\sum_{j=0}^{\nu=1}\chi(X)b_{r-2j}(t,X)\in S(\mu,r-2\nu),\label{eq:AsympototicExpansion}
\end{equation}
 where $\chi$ is an excision function.
\end{enumerate}
\end{definition}

\begin{remark}

We call $b_{r}=\sigma_{r}(B)$ the \textbf{principal symbol} of $B$.

\end{remark}

\begin{remark}

\textbf{Semi-regular classical }symbols are defined accordingly, considering
also terms with odd degree of homogeneity in the expansion formula
(\ref{eq:AsympototicExpansion}), and the class of pseudodifferential
operators associated to them is denoted by $\mathrm{OP}S_{\mathrm{sreg}}(\mu,r)$.

\end{remark}

\section{\label{sec:Heat-semigroup-parametrix}Parametrix of the heat-semigroup}

In this section we will construct the parametrix of the heat-semigroup
of a semiregular positive elliptic pseudodifferential operator.

\begin{lemma} \label{lem:Heat-Parametrix}

Let $A=A^{*}$, with $A\sim\sum_{j\geq0}a_{2-j}\in S_{\mathrm{sreg}}(m^{2},g;\mathsf{M}_{N}),$
be an elliptic second order system such that $A^{{\rm w}}>0$. Then,
there exists $U_{A}\in{\rm OP}S_{\mathrm{sreg}}(\mu,0)$ such that
\[
\frac{d}{dt}U_{A}+A^{{\rm w}}U_{A}:\mathscr{S}^{'}(\mathbb{R}^{n};\mathbb{C}^{N})\rightarrow\mathscr{S}(\overline{\mathbb{R}}_{+};\mathscr{S}(\mathbb{R}^{n};\mathbb{C}^{N}))
\]
 is smoothing, and
\[
U_{A}|_{t=0}-I_{N}:\mathscr{S}^{'}(\mathbb{R}^{n};\mathbb{C}^{N})\rightarrow\mathscr{S}(\mathbb{R}^{n};\mathbb{C}^{N})
\]
 is smoothing. Moreover, the principal symbol of $U_{A}$ is 
\[
\overline{\mathbb{R}}_{+}\times\left(\mathbb{R}^{2n}\setminus\{0\}\right)\ni(t,X)\mapsto e^{-ta_{\mu}(X)}.
\]

\end{lemma}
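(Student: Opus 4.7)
The plan is to construct $U_A$ by a standard ansatz in the parabolic calculus: look for a symbol $u$ with a semiregular asymptotic expansion
\[
u(t,X)\sim\sum_{j\geq 0}u_{-j}(t,X),\qquad u_{-j}(t,\tau X)=\tau^{-j}u_{-j}(\tau^{\mu}t,X),
\]
and then set $U_{A}=u^{\mathrm w}$. Plugging $u^{\mathrm w}$ into $\tfrac{d}{dt}+A^{\mathrm w}$ and passing to Weyl symbols via the Moyal product gives
\[
\sigma\!\left(\tfrac{d}{dt}U_{A}+A^{\mathrm w}U_{A}\right)=\partial_{t}u+a\#u.
\]
Because the metric used to measure the classes $S(\mu,r)$ is the ``semiregular'' one, each factor of the Moyal expansion drops order by $1$ (not $2$), so grouping terms of the same parabolic order yields a hierarchy of equations, one per index $j\ge 0$.

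The leading (eikonal) equation is $\partial_{t}u_{0}+a_{2}u_{0}=0$ with $u_{0}|_{t=0}=I_{N}$, whose unique solution is
\[
u_{0}(t,X)=e^{-t\,a_{2}(X)}.
\]
Since $a_{2}$ is $2$-homogeneous, $u_{0}$ satisfies the parabolic homogeneity with $j=0$; since $A^{\mathrm w}>0$ the matrix $a_{2}(X)$ is Hermitian positive definite for $|X|$ large, so $u_{0}$ and all of its $X$-derivatives are exponentially decreasing in $t$ in the sense required by Definition 2 and the excision in (\ref{eq:AsympototicExpansion}). For $j\geq 1$ the $j$th transport equation takes the form
\[
\partial_{t}u_{-j}+a_{2}u_{-j}=F_{-j},\qquad u_{-j}|_{t=0}=0,
\]
where $F_{-j}(t,X)$ is a universal, parabolically homogeneous of degree $-j$, polynomial expression in $a_{2-k}$ ($k\le j$), in the previously constructed $u_{-\ell}$ ($\ell<j$), and in their $X$-derivatives, produced by collecting the Moyal terms. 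I would solve each such equation by Duhamel,
\[
u_{-j}(t,X)=\int_{0}^{t}e^{-(t-s)a_{2}(X)}F_{-j}(s,X)\,ds,
\]
and verify inductively on $j$ that (i) $u_{-j}$ has the correct parabolic homogeneity, by rescaling $s=\tau^{\mu}s'$ inside the integral and using the homogeneity of $a_{2}$ and of $F_{-j}$, and (ii) $u_{-j}$ and all its derivatives satisfy the estimates of $S(\mu,-j)$, again by using positivity of $a_{2}$ away from the origin. Then a Borel summation of the $u_{-j}$ inside the class $S_{\mathrm{sreg}}(\mu,0)$ produces an actual symbol $u$; I set $U_{A}=u^{\mathrm w}$.

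By construction, the Weyl symbol of $\tfrac{d}{dt}U_{A}+A^{\mathrm w}U_{A}$ lies in $\bigcap_{r}S(\mu,r)$, and the Weyl symbol of $U_{A}|_{t=0}-I_{N}$ lies in $\bigcap_{r}S(m^{2},g;\mathsf{M}_{N})$, so both operators are smoothing in the sense of Definition 1. The principal symbol of $U_{A}$ is $u_{0}=e^{-t\,a_{\mu}(X)}$, as required. The main obstacle I expect is the bookkeeping arising from non-commutativity: because $a_{2}$ is matrix valued and need not commute with $a_{1}$ or $a_{0}$, the right-hand sides $F_{-j}$ and the Duhamel solutions produce iterated integrals of propagators $e^{-(t-s)a_{2}}$ which must be shown to both respect the parabolic homogeneity relation and decay in $t$ uniformly on cones in $X$. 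Making this compatible with the semiregular (order-$1$ descent) expansion, so that both even and odd terms $u_{-2j}$ and $u_{-2j-1}$ contribute and belong to the correct symbol class, is the delicate bulk of the argument; the rest is formal manipulation with the Moyal calculus.
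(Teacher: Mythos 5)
Your proposal is correct and follows essentially the same route as the paper: construct the symbol term by term, with $u_{0}=e^{-ta_{2}}$ solving the leading equation, the lower-order terms obtained from transport equations solved by Duhamel with the propagator $e^{-(t-s)a_{2}}$, parabolic homogeneity checked by rescaling inside the integral, and a Borel-type summation in the class to produce $U_{A}$ with smoothing error and initial defect. The only difference is that you impose $u_{-j}|_{t=0}=0$ for $j\geq 1$ at the symbol level, whereas the paper's proof allows nonzero initial corrections $-p_{-j}$ coming from $(B_{0}+\cdots+B_{-j+1})|_{t=0}-I_{N}$ and then specializes to vanishing initial data in Remark \ref{rem:ConstructB_-j}, so this is an inessential variant.
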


\begin{proof}

We will prove the lemma by constructing the terms of the expansion
of the symbol of $U_{A}$. In fact, we determine those terms by solving
a sequence of transport equations.

Let 
\[
\overline{\mathbb{R}}_{+}\times\left(\mathbb{R}^{2n}\setminus\{0\}\right)\ni(t,X)\mapsto b_{0}(t,X):=e^{-ta_{\mu}(X)},
\]
and let $B_{0}\in{\rm OP}S_{{\rm sreg}}(\mu,0)$ with principal symbol
given by $b_{0}$. Hence, by Lemma 6.1.3 at p. 81 of \cite{P1:2010}
we have that $\frac{d}{dt}B_{0}+A^{{\rm w}}B_{0}\in{\rm OP}S_{{\rm sreg}}(\mu,\mu-1)$
with principal symbol $r_{\mu-1}:=a_{\mu-1}b_{0}$. Moreover, $B_{0}|_{t=0}-I_{N}$
is a pseudodifferential system with symbol in $S_{\mathrm{sreg}}(m^{-1},g;\mathsf{M}_{N})$
and we denote its principal symbol by $p_{-1}$.

Next, we look for a symbol $b_{-1}(t,X)$, positively homogeneous
of degree $-1$ (in the sense of (\ref{eq:Homogeneity})), such that
\begin{equation}
\begin{cases}
\frac{d}{dt}b_{-1}+a_{\mu}b_{-1}=-r_{\mu-1},\\
b_{-1}|_{t=0}=-p_{-1}.
\end{cases}\label{eq:Syst_b_-1}
\end{equation}

The solution of (\ref{eq:Syst_b_-1}),
\[
b_{-1}(t,X):=-e^{-ta_{\mu}(X)}p_{-1}(X)-\int_{0}^{t}e^{-(t-t^{'})a_{\mu}(X)}r_{\mu-1}(t^{'},X)\,dt^{'},
\]
 is easily seen to be smooth and have the required homogeneity properties
since 
\begin{align*}
b_{-1}(t,\tau X)= & -e^{-ta_{\mu}(X)}p_{-1}(X)-\int_{0}^{t}e^{-(t-t^{'})a_{\mu}(X)}r_{\mu-1}(t^{'},X)\,dt^{'}\\
= & -e^{-\tau^{\mu}ta_{\mu}(X)}\tau^{-1}p_{-1}(X)-\int_{0}^{t}e^{-\tau^{\mu}(t-t^{'})a_{\mu}(X)}\tau^{\mu-1}r_{\mu-1}(\tau^{\mu}t^{'},X)\,dt^{'}\\
= & \tau^{-1}\left(-e^{-\tau^{\mu}ta_{\mu}(X)}p_{-1}(X)-\int_{0}^{\tau^{\mu}t}e^{-(\tau^{\mu}t-t^{'})a_{\mu}(X)}r_{\mu-1}(t^{'},X)\,dt^{'}\right)\\
= & \tau^{-1}b_{-1}(\tau^{\mu}t,X),
\end{align*}
where the last equality follows from the change of variable $t\rightarrow\tau^{-\mu}t$
in the integral. Taking $B_{-1}\in\mathrm{OP}S_{{\rm sreg}}(\mu,-1)$
with principal symbol given by $b_{-1}$ gives
\[
\frac{d}{dt}(B_{0}+B_{-1})+A^{{\rm w}}(B_{0}+B_{-1})\in{\rm OP}S_{\mathrm{sreg}}(\mu,\mu-2).
\]

Moreover, $(B_{0}+B_{-1})|_{t=0}-I_{N}$ is a pseudodifferential system
with symbol in $S_{\mathrm{sreg}}(m^{-2},g;\mathsf{M}_{N})$ and we
denote its principal symbol by $p_{-2}$.

Iterating the above procedure gives a formal series 
\[
\sum_{k\geq0}B_{-k},\;B_{-k}\in\mathrm{OP}S_{\mathrm{sreg}}(\mu,-k).
\]

Hence, there exist an operator $U_{A}\in\mathrm{OP}S_{\mathrm{sreg}}(\mu,0)$
for which
\[
U_{A}-\sum_{k=0}^{\nu-1}B_{-k}\in\mathrm{OP}S(\mu,-\nu),\,\forall\nu\geq1,
\]
 by an adaptation of Proposition 3.2.15 at p. 32 of \cite{P1:2010}
and therefore we obtain the required parametrix.

\end{proof}

\begin{remark} \label{rem:ConstructB_-j}

In the applications of Lemma \ref{lem:Heat-Parametrix} we shall always
consider a parametrix approximation of $e^{-tA^{\mathrm{w}}}$ where
$b_{-j}|_{t=0}=0$ for $j\geq1$,
\[
B_{-j}:=(\chi b_{-j})^{\mathrm{w}}(t,x,D_{x}),
\]
 for all $t\in\overline{\mathbb{R}}_{+}$, where $\chi$ is a chosen
excision function. Hence, consider the symbol $c_{A}(t,X)$ of $U_{A}(t)$,
i.e. $U_{A}(t)=c_{A}^{\mathrm{w}}(t,x,D)$, given by
\begin{equation}
c_{A}(t,X)=\sum_{j\geq0}\chi_{j}(X)b_{-j}(t,X),\label{eq:c_A_Series}
\end{equation}
where $\chi_{0}(X):=\chi(X)$ and $\chi_{j}(X):=\chi(X/R_{j})$, $j\geq1$,
with $R_{j}\nearrow+\infty$, as $j\rightarrow+\infty$, sufficiently
fast (for instance, see the proof of Proposition 3.2.15 at p. 32 of
\cite{P1:2010}). Thus, the series (\ref{eq:c_A_Series}) is locally
finite in $X$ and, hence, $c_{A}(t,\cdot)\in C^{\infty}$ for all
$t\in\overline{\mathbb{R}}_{+}$.

From now on we will write $U_{A}\sim{\displaystyle \sum_{j\geq0}}B_{-j}$
.

\end{remark}

\section{\label{sec:Time-decrease-property}Vanishing property}

Let $A^{\mathrm{w}}$ be as in the previous section. In this section
we prove the technical proposition that we need to control the behavior
of the $b_{-j}$ constructed in Lemma \ref{lem:Heat-Parametrix} as
$t\rightarrow0+$, that is, its vanishing property, for a class of
positive and self-adjoint elliptic \textit{differential} systems with
symbol in $S_{\mathrm{sreg}}(m^{2},g;\mathsf{M}_{N})$. Hence, we
will suppose the symbol of $A^{\mathrm{w}}$ to be $a_{2}+a_{1}+a_{0}$
where $a_{j}$ is an $N\times N$ matrix-valued function on $\mathbb{R}^{2n}$
with homogeneous polynomial of degree $j$ entries for all $j=0,$
$1$, $2$.

\begin{proposition} \label{prop:TimeDecrease-Diff}

Let $A=a_{2}+a_{1}+a_{0}$ be an elliptic of second order where $a_{j}$
is an $N\times N$  matrix-valued function on $\mathbb{R}^{2n}$ with
homogeneous polynomial of degree $j$ entries for all $j=0,1,2$,
let $A^{{\rm w}}>0$ , and let $U_{A}$ be the heat-semigroup $e^{-tA^{{\rm w}}}$
parametrix constructed by Lemma \ref{lem:Heat-Parametrix}. Then,
denoting again by ${\displaystyle \sum_{j\geq0}}B_{-j}$ the expansion
of $U_{A}$ constructed in the proof of Lemma \ref{lem:Heat-Parametrix}
and by $b_{-j}$ the principal symbol of $B_{-j}$, we have for all
$j\geq0$ and $h=0$, $1$
\[
b_{-2j-h}(t,\omega)=O(t^{j+h}),\;t\rightarrow0+,
\]
 and for all $\alpha$, $\beta\in\mathbb{Z}_{+}^{n}$, with $|\alpha|=2k+1$,
$k\geq0$ and $|\beta|\leq1$ we have:
\[
\partial_{X}^{\alpha+\beta}b_{-2j-h}(t,\omega)=O(t^{j+k+h|\beta|+1}),\;t\rightarrow0+,
\]
 where the constants in $O(\cdot)$ do not depend on $\omega\in\mathbb{S}^{2n-1}$. 

\end{proposition}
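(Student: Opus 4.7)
The plan is to prove both estimates simultaneously by strong induction on the index $k=2j+h$ of $b_{-k}$, exploiting the Duhamel representation of the transport equations with the zero initial datum prescribed in Remark \ref{rem:ConstructB_-j}:
\[
b_{-k}(t,X) \;=\; -\int_0^t e^{-(t-t')a_2(X)}\, r_{2-k}(t',X)\,dt',\qquad k\geq 1.
\]
Here $r_{2-k}$ is the order-$(2-k)$ component of $(\partial_t+A^{\mathrm{w}})\bigl(B_0+\cdots+B_{-(k-1)}\bigr)$ left uncanceled by the preceding transport equations. Unfolding the Weyl product, and using that each $a_i$ is a polynomial of degree exactly $i$ (so the Weyl series terminates at level $\ell=i$), one gets a \emph{finite} sum
\[
r_{2-k}(t,X) \;=\; \sum_{\substack{0\leq \ell\leq i,\; 0\leq m<k \\ i-m-2\ell=2-k}} C_{i,m,\ell}\,(\partial_X^{\ell}a_i)(X)\,(\partial_X^{\ell}b_{-m})(t,X).
\]

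The base cases $k=0,1$ are handled directly. For $b_0(t,X)=e^{-ta_2(X)}$, iterated use of the matrix Duhamel formula
\[
\partial_X e^{-ta_2(X)} \;=\; -\int_0^t e^{-(t-s)a_2(X)}(\partial_X a_2)(X)\,e^{-sa_2(X)}\,ds
\]
shows that every $X$-differentiation of $e^{-ta_2(X)}$ produces one extra factor of $t$; combined with the fact that $\partial^{\gamma}a_2\equiv 0$ for $|\gamma|\geq 3$ and $\partial^{\gamma}a_2$ is a constant matrix for $|\gamma|=2$, a straightforward counting yields all claimed estimates at $X=\omega\in\mathbb{S}^{2n-1}$. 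The case $k=1$ reduces to the explicit formula $b_{-1}=-\int_0^t e^{-(t-t')a_2}a_1\,e^{-t'a_2}dt'$ from the proof of Lemma \ref{lem:Heat-Parametrix}, now using $\partial^{\gamma}a_1\equiv 0$ for $|\gamma|\geq 2$.

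For the inductive step, assume the claim for all $m<k$. In each summand of $r_{2-k}$ the factor $(\partial_X^{\ell}a_i)(\omega)$ is a bounded constant matrix (vanishing when $\ell+|\mathrm{extra}|>i$), and $(\partial_X^{\ell}b_{-m})(t,\omega)$ has the $t$-vanishing order supplied by the inductive hypothesis. A direct check shows that the dominant contribution comes from $i=0,\ \ell=0,\ m=k-2$, producing order $\lceil k/2\rceil-1$, which after the outer integration $\int_0^t$ yields the required $\lceil k/2\rceil=j+h$; every other admissible triple $(i,m,\ell)$ contributes at least this order. The derivative estimates follow by Leibniz on the differentiated Duhamel integrand: derivatives landing on $e^{-(t-t')a_2(X)}$ again produce extra $(t-t')$-factors via the matrix Duhamel formula, derivatives landing on $\partial_X^{\ell}a_i$ annihilate the term as soon as their cumulative order exceeds $i$, and derivatives landing on $\partial_X^{\ell}b_{-m}$ are absorbed by the induction applied with a larger multi-index.

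The main obstacle is the combinatorial bookkeeping in this last step: in every Leibniz summand of $\partial_X^{\alpha+\beta}b_{-k}(t,\omega)$ the $t$-powers accumulated from the outer $dt'$-integration, from each $X$-derivative landing on a matrix exponential, and from the induced vanishing of $\partial^{\bullet}b_{-m}$ must sum to precisely the predicted exponent $j+k+h|\beta|+1$. The refined dependence on $|\beta|\in\{0,1\}$ is the most delicate point: it records whether the ``extra'' derivative falls on an exponential factor (gaining a $t$) or on a polynomial coefficient $a_i$ with $i\geq 1$ (gaining none), and one has to verify by cases that this dichotomy is compatible with the current parity $h$, so that the linkage $i-m-2\ell=2-k$ propagates the claimed integer exponents exactly.
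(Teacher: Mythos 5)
Your strategy coincides with the paper's: induction on the index of $b_{-k}$ through the Duhamel representation with zero initial data, the terminating Weyl expansion of $r_{2-k}$ (which is exactly (\ref{eq:r-Definition-Diff})), and the dichotomy that a derivative landing on an exponential factor gains a power of $t$ while one landing on a polynomial coefficient gains nothing and eventually annihilates it. Your verification of the undifferentiated bounds $b_{-2j-h}(t,\omega)=O(t^{j+h})$ is sound. However, two points prevent this from being a proof. First, the base-case assertion that ``every $X$-differentiation of $e^{-ta_2(X)}$ produces one extra factor of $t$'' is false as stated: differentiating the matrix Duhamel formula once more produces the term $-\int_0^t e^{-(t-s)a_2}a_2^{(2)}e^{-sa_2}\,ds=O(t)$, so $\partial_X^2 e^{-ta_2}=O(t)$, not $O(t^2)$, precisely because the second derivative may fall on the factor $\partial_X a_2$ rather than on an exponential. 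The correct statement, which is what the proposition encodes, is that only every other derivative gains a $t$ (odd order $2k+1$ and the following even order $2k+2$ share the exponent $k+1$ for $b_0$), and this is the mechanism you must track, not a uniform gain per derivative.

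Second, and more seriously, the derivative estimates $\partial_X^{\alpha+\beta}b_{-2j-h}(t,\omega)=O(t^{j+k+h|\beta|+1})$ --- which are the actual content of the proposition, in particular the parity-dependent refinement $h|\beta|$ --- are never established: you state that the accumulated $t$-powers ``must sum to precisely the predicted exponent'' and that ``one has to verify by cases that this dichotomy is compatible with the current parity $h$,'' but you do not perform that verification, and it is exactly this case analysis that constitutes the bulk of the paper's argument. There the proof runs a double induction (outer on the index $2j+h$, inner on the derivative order for fixed index): one differentiates the transport Cauchy problem, so that the equation for $b_{-2j-h}^{(2k+1)}$ has right-hand side built from $a_2^{(1)}b_{-2j-h}^{(2k)}$, $a_2^{(2)}b_{-2j-h}^{(2k-1)}$ and $r_{2-2j-h}^{(2k+1)}$, and then one checks term by term --- separately for $h=0$ and $h=1$, and separately for the odd and the subsequent even derivative --- that each contribution has the claimed order before the final Duhamel integration adds one. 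In your Leibniz-on-the-integrand scheme the same bookkeeping reappears as iterated expansions of derivatives of $e^{-(t-t')a_2}$, and it is precisely there that the exponent $h|\beta|$ must be extracted; since you defer this, what you have is a correct outline of the paper's proof rather than a complete argument.
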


\begin{proof}

We prove this theorem by induction taking into account the definition
of the terms $b_{-j}$ and making straightforward computations.

We won\textquoteright t be writing the dependence on $\omega$, and
we will write $b_{-j}^{(\ell)}$ for a generic $\partial_{X}^{\alpha}b_{-j}$
with $|\alpha|=\ell$.

First of all, we remind that given two pseudodifferential operators
with symbol $a$ and $b$, then by the composition law for pseudodifferential
operators (see, for instance, formula (3.3) at p. 19 of \cite{P1:2010})
$a^{{\rm w}}b^{{\rm w}}$ has symbol
\[
a\#b\sim ab+\sum_{j\geq1}\frac{1}{j!}\left(\frac{-i}{2}\right)^{j}\{a,b\}_{(j)},
\]
 where $\{\cdot,\cdot\}_{(1)}=\{\cdot,\cdot\}$ is the Poisson bracket.

The terms $r_{2-j}$, $j\geq1$ obtained in the proof of Lemma \ref{lem:Heat-Parametrix}
is 
\begin{align}
r_{2-j}= & a_{0}b_{-(j-2)}+a_{1}b_{-(j-1)}+\frac{1}{2}\left(\frac{-i}{2}\right)^{2}\{a_{2},b_{-(j-4)}\}_{(2)}\label{eq:r-Definition-Diff}\\
 & -\frac{i}{2}\{a_{2},b_{-(j-2)}\}-\frac{i}{2}\{a_{1},b_{-(j-3)}\},\,\,j\geq0,\nonumber 
\end{align}
where we set $b_{k}\equiv0$ for all $k=1,\ldots,4$ and we recall
that $a_{0}$ is a \textit{constant} $N\times N$ Hermitian matrix.
Therefore, by the construction in the proof of Lemma \ref{lem:Heat-Parametrix},
\begin{equation}
\begin{cases}
b_{0}(t,X)=e^{-ta_{2}(X)},\\
b_{-j}(t,X)=-\int_{0}^{t}e^{-(t-t^{'})a_{2}}r_{2-j}(t^{'},X)\,dt^{'},\:j\geq1.
\end{cases}\label{eq:b-Formula-Diff}
\end{equation}
In fact, $p_{-j}=0$ for any $j\geq1$ under our hypotheses. 

Denote by $E(a_{2}^{(2)},b_{-j}^{(2)})$, resp. $E(a_{2}^{(1)},b_{-j}^{(1)})$,
a generic expression obtained by taking the (matrix) product of derivatives
of order $2$, resp. order $1$, of $a_{2}$ with derivatives of order
$2$, resp. order $1$, of $b_{-j}$. Hence for all $j\geq0$,
\[
\{a_{2},b_{-j}\}=E(a_{2}^{(1)},b_{-j}^{(1)}),\,\text{ and }\,\{a_{2},b_{-j}\}_{(2)}=E(a_{2}^{(2)},b_{-j}^{(2)}).
\]

Therefore, in $\{a_{2},b_{-j}\}_{(2)}=E(a_{2}^{(2)},b_{-j}^{(2)})$
we have a \textit{constant} coefficient matrix (given by partial derivatives
of order $2$ of $a_{2}$) times partial derivatives of order $2$
of $b_{-j}$. 

We proceed by induction. We start with the case $j=0$ and $h=0$.
In this case $b_{0}$ is the solution of
\begin{equation}
\begin{cases}
\partial_{t}b_{0}+a_{2}b_{0}=0,\\
b_{0}|_{t=0}=I_{N},
\end{cases}\label{eq:Syst-b_0-Diff}
\end{equation}
 whence $b_{0}(t)=O(1)$ as $t\rightarrow0+$.

Next, by induction on $\ell$ we show that $b_{0}^{(\ell)}$ has the
claimed property.

For $\ell=1$ we take a $1$st-order partial derivative with respect
to $X$ of (\ref{eq:Syst-b_0-Diff}) and have
\[
\begin{cases}
\partial_{t}b_{0}^{(1)}+a_{2}b_{0}^{(1)}=-a_{2}^{(1)}b_{0},\\
b_{0}^{(1)}|_{t=0}=0,
\end{cases}
\]
 whence
\begin{equation}
b_{0}^{(1)}(t)=-\int_{0}^{t}e^{-(t-t^{'})a_{2}}a_{2}^{(1)}b_{0}(t^{'})\,dt^{'}=O(t),\;t\rightarrow0+.\label{eq:b^(1)_0-Explicit}
\end{equation}

For $\ell=2$ we take a $1$st-order partial derivative with respect
to $X$ of (\ref{eq:b^(1)_0-Explicit}) and have
\begin{align*}
b_{0}^{(2)}(t)= & -\int_{0}^{t}(e^{-(t-t^{'})a_{2}})^{(1)}a_{2}^{(1)}b_{0}(t^{'})\,dt^{'}-\int_{0}^{t}e^{-(t-t^{'})a_{2}}a_{2}^{(2)}b_{0}(t^{'})\,dt^{'}\\
 & -\int_{0}^{t}e^{-(t-t^{'})a_{2}}a_{2}^{(1)}b_{0}(t^{'})^{(1)}\:dt^{'}\\
= & O(t^{2})+O(t)+O(t^{2})=O(t),\;t\rightarrow0+.
\end{align*}

Next, suppose $b_{0}^{(2k-1+\ell)}(t)=O(t^{k})$ as $t\rightarrow0+$,
for $\ell=0,1$ and $k\geq0$. We want to prove that $b_{0}^{(2k+1+\ell)}(t)=O(t^{k+1})$,
as $t\rightarrow0+$, for $\ell=0,1$. Using (\ref{eq:Syst-b_0-Diff})
and taking a $2k+1$-st partial derivative with respect to $X$ we
obtain (recall that $a_{2}^{(p)}=0$ for all $p\geq3$ since $a_{2}$
has polynomial of degree $2$ entries)
\[
\begin{cases}
\partial_{t}b_{0}^{(2k+1)}+a_{2}b_{0}^{(2k+1)}=-a_{2}^{(1)}b_{0}^{(2k)}-a_{2}^{(2)}b_{0}^{(2k-1)}=O(t^{k})+O(t^{k})=O(t^{k}),\\
b_{0}^{(2k+1)}|_{t=0}=0,
\end{cases}
\]
whence $b_{0}^{(2k+1)}(t)=O(t^{k+1})$ as $t\rightarrow0+$. Then,
as before,
\begin{align*}
b_{0}^{(2k+2)}(t)= & -\int_{0}^{t}(e^{-(t-t^{'})a_{2}})^{(1)}(a_{2}^{(1)}b_{0}^{(2k)}(t^{'}))+a_{2}^{(2)}b_{0}^{(2k-1)}(t^{'}))\,dt^{'}\\
 & -\int_{0}^{t}e^{-(t-t^{'})a_{2}}\partial_{X}(a_{2}^{(1)}b_{0}^{(2k)}(t^{'}))+a_{2}^{(2)}b_{0}^{(2k-1)}(t^{'}))\:dt^{'}\\
= & O(t^{k+2})+O(t^{k+1})=O(t^{k+1}),\;t\rightarrow0+.
\end{align*}
Hence, the result is proved for $b_{0}$.

Next, we prove the result for the case $j=0$ and $h=1$. In this
case by (\ref{eq:b-Formula-Diff})
\begin{align}
b_{-1}(t)= & -\int_{0}^{t}e^{-(t-t^{'})a_{2}}r_{2-1}(t^{'})\,dt^{'}\nonumber \\
= & -\int_{0}^{t}e^{-(t-t^{'})a_{2}}a_{1}\underbrace{b_{0}(t^{'})}_{=O(1),\,t^{'}\rightarrow0+}\,dt^{'}\label{eq:b_-1Formula}\\
= & O(t),\;t\rightarrow0+.\nonumber 
\end{align}
By taking the derivative in $X$ of (\ref{eq:b_-1Formula})
\begin{align*}
b_{-1}^{(1)}(t)= & -\int_{0}^{t}(e^{-(t-t^{'})a_{2}})^{(1)}a_{1}b_{0}(t^{'})\,dt^{'}\\
 & -\int_{0}^{t}e^{-(t-t^{'})a_{2}}a_{1}^{(1)}b_{0}(t^{'})\,dt^{'}\\
 & -\int_{0}^{t}e^{-(t-t^{'})a_{2}}a_{1}\underbrace{b_{0}^{(1)}(t^{'})}_{=O(t^{'}),\,t^{'}\rightarrow0+}\,dt^{'}\\
= & O(t^{2})+O(t)+O(t^{2})=O(t),\:t\rightarrow0+,
\end{align*}
and by taking another derivative in $X$ we obtain that $b_{-1}^{(2)}(t)=O(t^{2})$
(recall that $a_{1}^{(p)}=0$ for all $p\geq2$ since $a_{1}$ has
polynomial of degree $1$ entries). 

Next, suppose $b_{-1}^{(2k-1+\ell)}(t)=O(t^{k+\ell})$, as $t\rightarrow0+$,
for $\ell=0,1$ and $k\geq0$. We want to prove that $b_{-1}^{(2k+1+\ell)}(t)=O(t^{k+\ell+1})$,
as $t\rightarrow0+$, for $\ell=0$, $1$. First of all, we notice
that, by (\ref{eq:b-Formula-Diff}), $b_{-1}$ is the solution of
the Cauchy problem 
\begin{equation}
\begin{cases}
\partial_{t}b_{-1}+a_{2}b_{-1}=-r_{1}=-a_{1}b_{0},\\
b_{-1}|_{t=0}=0,
\end{cases}\label{eq:Syst-b_-1}
\end{equation}
By using (\ref{eq:Syst-b_-1}) and taking a $2k+1$-st partial derivative
with respect to $X$
\[
\begin{cases}
\begin{array}{rl}
\partial_{t}b_{-1}^{(2k+1)}+a_{2}b_{-1}^{(2k+1)}= & -a_{2}^{(1)}b_{-1}^{(2k)}-a_{2}^{(2)}b_{-1}^{(2k-1)}-a_{1}^{(1)}b_{0}^{(2k)},\\
= & O(t^{k+1})+O(t^{k})+O(t^{k})
\end{array}\\
b_{-1}^{(2k+1)}|_{t=0}=0,
\end{cases}
\]
 whence $b_{-1}^{(2k+1)}(t)=O(t^{k+1})$ as $t\rightarrow0+$. Then,
as before,
\begin{align*}
b_{-1}^{(2k+2)}(t)= & -\int_{0}^{t}(e^{-(t-t^{'})a_{2}})^{(1)}(a_{2}^{(1)}b_{-1}^{(2k)}(t^{'})+a_{2}^{(2)}b_{-1}^{(2k-1)}(t^{'})+a_{1}^{(1)}b_{0}^{(2k)}(t^{'}))\,dt^{'}\\
 & -\int_{0}^{t}e^{-(t-t^{'})a_{2}}\partial_{X}(a_{2}^{(1)}b_{-1}^{(2k)}(t^{'})+a_{2}^{(2)}b_{-1}^{(2k-1)}(t^{'})+a_{1}^{(1)}b_{0}^{(2k)}(t^{'}))\:dt^{'}\\
= & O(t^{k+2})+O(t^{k+2})=O(t^{k+2}),\;t\rightarrow0+.
\end{align*}
Hence, the result has been proved for $b_{-1}$.

Next, suppose, by induction, that for all $\ell=0,1$, all $h=0,1$
and all $j^{'}\leq j$ 
\[
b_{-2j^{'}-h}=O(t^{j^{'}+h}),\,\,b_{-2j^{'}-h}^{(2k+1+\ell)}=O(t^{j^{'}+k+h\ell+1}),\quad t\rightarrow0+.
\]
We want to prove $b_{-2(j+1)}=O(t^{j+1})$ and $b_{-2(j+1)}^{(2k+1+\ell)}=O(t^{j+1+k+1})$
for $\ell=0,1$, as $t\rightarrow0+$, that is, the case $h=0$ (after
that, we will prove that $b_{-2(j+1)-1}=O(t^{j+1+1})$ and $b_{-2(j+1)-1}^{(2k+1+\ell)}=O(t^{j+1+k+\ell+1})$
for $t\rightarrow0+$, i.e. the case $h=1$). To do it, we have to
examine $r_{2-2(j+1)}$ (see (\ref{eq:b-Formula-Diff})). In the first
place we have from (\ref{eq:r-Definition-Diff})
\begin{align*}
\hspace{-0.8cm}r_{2-2(j+1)}= & a_{0}b_{-2j}+a_{1}b_{-2j-1}+\frac{1}{2}\left(\frac{-i}{2}\right)^{2}\{a_{2},b_{-2(j-1)}\}_{(2)}-\frac{i}{2}\{a_{2},b_{-2j}\}-\frac{i}{2}\{a_{1},b_{-(2j-1)}\}\\
= & O(t^{j})+O(t^{j+1})+O(t^{j-1+1})+O(t^{j+1})+O(t^{j-1+1})\\
= & O(t^{j}),\quad t\rightarrow0+.
\end{align*}
Consider next, keeping into account that $a_{q}^{(p)}=0$ for all
$p\geq q+1$ since $a_{q}$ has polynomial of degree $q=1$, $2$
entries,
\begin{align*}
r_{2-2(j+1)}^{(2k+1)}= & a_{0}b_{-2j}^{(2k+1)}+a_{1}b_{-2j-1}^{(2k+1)}+E(a_{1}^{(1)},b_{-2j-1}^{(2k)})+E(a_{2}^{(2)},b_{-2(j-1)}^{(2k+3)})+E(a_{2}^{(1)},b_{-2j}^{(2k+2)})\\
 & +E(a_{2}^{(2)},b_{-2j}^{(2k+1)})+E(a_{1}^{(1)},b_{-(2j-1)}^{(2k+2)})\\
= & O(t^{j+k+1})+O(t^{j+k+1})+O(t^{j+k-1+1+1})+O(t^{j-1+k+1+1})+O(t^{j+k+1})\\
 & +O(t^{j+k+1})+O(t^{j-1+k+1+1})\\
= & O(t^{j+k+1}),\quad t\rightarrow0+.
\end{align*}
Taking an extra derivative, one immediately sees also that
\[
r_{2-2(j+1)}^{(2k+2)}=O(t^{j+k+1}),\quad t\rightarrow0+.
\]
Hence, for all $\ell=0,1$ and for $k\geq-1$
\[
r_{2-2(j+1)}^{(2k+1+\ell)}=O(t^{j+k+1}),\quad t\rightarrow0+
\]
 (when $k=-1$ we take $\ell=1$). Since $b_{-2(j+1)}$ is the solution
of the Cauchy problem
\begin{equation}
\begin{cases}
\partial_{t}b_{-2(j+1)}+a_{2}b_{-2(j+1)}=-r_{2-2(j+1)},\\
b_{-2(j+1)}|_{t=0}=0,
\end{cases}\label{eq:Syst-b_-2(j+1)}
\end{equation}
we obtain $b_{-2(j+1)}(t)=O(t^{j+1})$ as $t\rightarrow0+$. As before,
taking one partial derivative with respect to $X$ yields
\[
\begin{cases}
\partial_{t}b_{-2(j+1)}^{(1)}+a_{2}b_{-2(j+1)}^{(1)}=-a_{2}^{(1)}b_{-2(j+1)}-r_{2-2(j+1)}^{(1)}=O(t^{j+1})+O(t^{j+1}),\\
b_{-2(j+1)}^{(1)}|_{t=0}=0,
\end{cases}
\]
whence it follows that $b_{-2(j+1)}^{(1)}(t)=O(t^{j+2})$, and, taking
an extra derivative, also that, as $t\rightarrow0+$,
\[
b_{-2(j+1)}^{(2)}(t)=-\partial_{X}\left(\int_{0}^{t}e^{-(t-t^{'})a_{2}}\left(a_{2}^{(1)}b_{-2(j+1)}+r_{2-2(j+1)}^{(1)}\right)dt^{'}\right)=O(t^{j+2}).
\]
Supposing then by induction the estimates up to order $2k-1$ and
using
\[
\begin{cases}
\begin{array}{rl}
\partial_{t}b_{-2(j+1)}^{(2k+1)}+a_{2}b_{-2(j+1)}^{(2k+1)}= & -E(a_{2}^{(1)},b_{-2(j+1)}^{(2k)})-E(a_{2}^{(2)},b_{-2(j+1)}^{(2k-1)})-r_{2-2(j+1)}^{(2k+1)},\\
= & O(t^{j+1+k-1+1})+O(t^{j+1+k-1+1})+O(t^{j+k+1}),
\end{array}\\
b_{-1}^{(2k+1)}|_{t=0}=0,
\end{cases}
\]
 we obtain $b_{-2(j+1)}^{(2k+1)}(t)=O(t^{j+1+k+1})$, as $t\rightarrow0+$,
and using
\begin{align*}
\hspace{-7mm}
b_{-2(j+1)}^{(2k+2)}(t)=-\partial_{X}\left(\int_{0}^{t}e^{-(t-t^{'})a_{2}}\left(E(a_{2}^{(1)},b_{-2(j+1)}^{(2k)})+E(a_{2}^{(2)},b_{-2(j+1)}^{(2k-1)})+r_{-2j}^{(2k+1)}\right)\,dt^{'}\right),
\end{align*}
 also that
\[
b_{-2(j+1)}^{(2k+2)}(t)=O(t^{j+1+k+1}),\;t\rightarrow0+,
\]
 which proves the result for the case $h=0$.

Now, to complete the proof of this proposition we need to prove the
result for the case $h=1$, that is, for all $\ell=0$, $1$
\[
b_{-2(j+1)-1}=O(t^{j+2}),\,\,b_{-2(j+1)-1}^{(2k+1+\ell)}=O(t^{j+1+k+\ell+1}),\,t\rightarrow0+.
\]
To do it, we have to examine $r_{2-2(j+1)-1}$ and its derivatives,
that is,
\begin{align*}
r_{2-2(j+1)-1}= & a_{0}b_{-2j-1}+a_{1}b_{-2(j+1)}+\frac{1}{2}\left(\frac{-i}{2}\right)^{2}\{a_{2},b_{-2(j-1)-1}\}_{(2)}-\frac{i}{2}\{a_{2},b_{-2j-1}\}\\
 & -\frac{i}{2}\{a_{1},b_{-2j}\}\\
= & O(t^{j+1})+O(t^{j+1})+O(t^{j-1+1+1})+O(t^{j+1})+O(t^{j+1})\\
= & O(t^{j+1}),\;t\rightarrow0+,
\end{align*}
and 
\begin{align*}
r_{2-2(j+1)-1}^{(2k+1)}= & a_{0}b_{-2j-1}^{(2k+1)}+a_{1}b_{-2(j+1)}^{(2k+1)}+E(a_{1}^{(1)},b_{-2(j+1)}^{(2k)})+E(a_{2}^{(2)},b_{-2(j-1)-1}^{(2k+3)})\\
 & +E(a_{2}^{(1)},b_{-2j-1}^{(2k+2)})+E(a_{2}^{(2)},b_{-2j-1}^{(2k+1)})+E(a_{1}^{(1)},b_{-2j}^{(2k+2)})\\
= & O(t^{j+k+1})+O(t^{j+1+k+1})+O(t^{j+1+k-1+1})+O(t^{j-1+k+1+1})\\
 & +O(t^{j+k+1+1})+O(t^{j+k+1})+O(t^{j+k+1})\\
= & O(t^{j+k+1}),\;t\rightarrow0+.
\end{align*}
Taking an extra derivative, one immediately sees also that
\[
r_{2-2(j+1)-1}^{(2k+2)}=O(t^{j+k+2}),\;t\rightarrow0+.
\]
Hence, for all $\ell=0,1$ and for all $k\geq-1$
\[
r_{2-2(j+1)-1}^{(2k+1+\ell)}=O(t^{j+k+\ell+1}),\;t\rightarrow0+.
\]
(again, when $k=-1$ we take $\ell=1$). Since $b_{-2(j+1)-1}$ is
the solution of the Cauchy problem 
\begin{equation}
\begin{cases}
\partial_{t}b_{-2(j+1)-1}+a_{2}b_{-2(j+1)-1}=-r_{2-2(j+1)-1},\\
b_{-2(j+1)-1}|_{t=0}=0,
\end{cases}\label{eq:eq:Syst-b_-2(j+1)-1}
\end{equation}
we obtain $b_{-2(j+1)-1}(t)=O(t^{j+1+1})$ as $t\rightarrow0+$. As
before, taking one partial derivative with respect to $X$ yields
\[
\begin{cases}
\partial_{t}b_{-2(j+1)-1}^{(1)}+a_{2}b_{-2(j+1)-1}^{(1)}=-a_{2}^{(1)}b_{-2(j+1)-1}-r_{2-2(j+1)-1}^{(1)}=O(t^{j+2})+O(t^{j+1}),\\
b_{-2(j+1)-1}^{(1)}|_{t=0}=0,
\end{cases}
\]
whence it follows $b_{-2(j+1)-1}^{(1)}(t)=O(t^{j+2})$, and, taking
an extra derivative, also that, as $t\rightarrow0+$,
\[
b_{-2(j+1)-1}^{(2)}(t)=-\partial_{X}\left(\int_{0}^{t}e^{-(t-t^{'})a_{2}}\left(a_{2}^{(1)}b_{-2(j+1)-1}+r_{2-2(j+1)}^{(1)}\right)dt^{'}\right)=O(t^{j+3}).
\]
 Supposing then by induction the estimates up to order $2k$ and making
use of
\[
\hspace{-5mm}
\begin{cases}
\begin{array}{rl}
\partial_{t}b_{-2(j+1)-1}^{(2k+1)}+a_{2}b_{-2(j+1)-1}^{(2k+1)}= & -E(a_{2}^{(1)},b_{-2(j+1)-1}^{(2k)})-E(a_{2}^{(2)},b_{-2(j+1)-1}^{(2k-1)})-r_{2-2(j+1)-1}^{(2k+1)},\\
= & O(t^{j+1+k-1+1+1})+O(t^{j+1+k-1+1+1})+O(t^{j+1+k+1})
\end{array}\\
b_{-1}^{(2k+1)}|_{t=0}=0,
\end{cases}
\]
 we obtain $b_{-2(j+1)-1}^{(2k+1)}(t)=O(t^{j+k+2})$, as $t\rightarrow0+$,
and using
\begin{align*}
b_{-2(j+1)-1}^{(2k+2)}(t)= & -\partial_{X}\left(\int_{0}^{t}
e^{-(t-t^{'})a_{2}}\left(E(a_{2}^{(1)},b_{-2(j+1)-1}^{(2k)})+
E(a_{2}^{(2)},b_{-2(j+1)-1}^{(2k-1)})\right)\,dt^{'}\right)\\
 & -\partial_{X}\left(\int_{0}^{t}e^{-(t-t^{'})a_{2}}
\left(r_{2-2(j+1)-1}^{(2k+1)}\right)\,dt^{'}\right),
\end{align*}
 also that
\[
b_{-2(j+1)-1}^{(2k+2)}(t)=O(t^{j+1+k+1+1}),\;t\rightarrow0+,
\]
 which proves the proposition.

\end{proof}

\section{\label{sec:Meromorphic-continuation-of-zeta}Meromorphic continuation
of $\zeta_{A^{{\rm w}}}$}

Let $A^{\mathrm{w}}$ be as in the Section \ref{sec:Heat-semigroup-parametrix}.
In this section we will use the parametrix approximation of the heat-semigroup
construct in Lemma \ref{lem:Heat-Parametrix} to prove the result
about the continuation of the spectral zeta function of the class
of positive and self-adjoint elliptic operators $A^{\mathrm{w}}$
satisfying the hypotheses of Proposition \ref{prop:TimeDecrease-Diff}.
Namely, $\zeta_{A^{{\rm w}}}$ can be rewritten modulo a term holomorphic
on a an half plane of $\mathbb{C}$ as a linear complex combination
of meromorphic functions. Moreover, we will give explicit formulas
for the coefficients of this linear combination.

\begin{theorem} \label{Zeta_Diff_Thm}

Let $A=a_{2}+a_{1}+a_{0}$ be an elliptic system of second order where
$a_{j}$ is an $N\times N$  matrix-valued function on $\mathbb{R}^{2n}$
with homogeneous polynomial of degree $j$ entries for all $j=0,1,2$.
Moreover, suppose $A^{\mathrm{w}}>0$.

There exist constants $c_{-2j-h,n}$ with $0\leq j\leq n-1$, $h=0,1$,
and constants $c_{-2j-1,n}$, $C_{-2j}$ with $j\geq n$, such that,
for any given integer $\nu\in\mathbb{Z}_{+}$ with $\nu\geq n$,
\begin{align}
\zeta_{A^{{\rm w}}}(s)= & \frac{1}{\varGamma(s)}\left[\left(\sum_{h=0}^{1}\sum_{j=0}^{n-1}\frac{c_{-2j-h,n}}{s-(n-j)+h/2}\right)+\left(\sum_{j=n}^{\nu}\frac{c_{-2j-1,n}}{s-(n-j)+1/2}\right)\right.\label{eq:ZetaExpansion}\\
 & \left.+\left(\sum_{j=n}^{\nu}\frac{C_{-2j}}{s-(n-j)}\right)+H_{\nu}(s)\right]\nonumber 
\end{align}
where $\varGamma(s)$ is the Euler gamma function, and $H_{\nu}$
is holomorphic in the region ${\rm Re}s>(n-\nu)-1$. Consequently,
the spectral zeta function $\zeta_{A^{{\rm w}}}$ is meromorphic in
the whole complex plane $\mathbb{C}$ with at most simple poles at
$s=n$, $n-\frac{1}{2}$, $n-1$,$\ldots$ , $\frac{1}{2}$,$-\frac{1}{2}$,
$-\frac{3}{2}$, $...$, $n-\nu-\frac{1}{2}$. One has
\begin{equation}
c_{-2j-h,n}=(2\pi)^{-n}\int_{0}^{+\infty}\int_{\mathbb{S}^{2n-1}}\mathsf{Tr}\left(b_{-2j-h}(\rho^{2},\omega)\right)\rho^{2(n-j)-1-h}\,d\omega\,d\rho,\label{eq:c-Definition}
\end{equation}
where $0\leq j\leq n-1$, $h=0,1$ or $j\geq n$, $h=1$. In (\ref{eq:c-Definition})
the $b_{-2j-h}$ are the terms in the symbol of the parametrix $U_{A}\in{\rm OP}S_{{\rm sreg}}(2,0)$
constructed in the proof of Lemma \ref{lem:Heat-Parametrix} and Remark
\ref{rem:ConstructB_-j},
\[
U_{A}\sim{\displaystyle \sum_{j\geq0}}B_{-j}.
\]

\end{theorem}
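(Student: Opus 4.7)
My approach follows the Mellin-transform strategy of \cite{P1:2010}, with the heat semigroup replaced by the parametrix $U_A$ built in Lemma \ref{lem:Heat-Parametrix}. Since $A^{\mathrm w}>0$, $e^{-tA^{\mathrm w}}$ is trace-class and decays exponentially in trace norm as $t\to+\infty$, so
\[
\zeta_{A^{\mathrm w}}(s)=\frac{1}{\Gamma(s)}\int_0^{+\infty}t^{s-1}\,\mathrm{Tr}\,e^{-tA^{\mathrm w}}\,dt,\qquad\mathrm{Re}\,s>n.
\]
I would split the integral at $t=1$. The tail $\int_1^{+\infty}$ is entire in $s$ and gets absorbed into $H_\nu$. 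On the small-time piece I would write $e^{-tA^{\mathrm w}}=\sum_{k=0}^{K}B_{-k}(t)+R_K(t)$ with $K=K(\nu)$ chosen large enough that the remainder $R_K$ is smoothing enough to make $\int_0^1 t^{s-1}\mathrm{Tr}\,R_K(t)\,dt$ holomorphic in $\{\mathrm{Re}\,s>n-\nu-1\}$, hence also absorbable into $H_\nu(s)$.

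The core step is the analysis of $I_k(s):=\int_0^1 t^{s-1}\mathrm{Tr}\,B_{-k}(t)\,dt$ for each $k=2j+h$, $h\in\{0,1\}$. Taking a radial excision function $\chi$, passing to polar coordinates $X=\rho\omega$, and using the quasi-homogeneity $b_{-k}(t,\rho\omega)=\rho^{-k}b_{-k}(\rho^2 t,\omega)$ gives
\[
\mathrm{Tr}\,B_{-k}(t)=(2\pi)^{-n}\int_0^{+\infty}\chi(\rho)\rho^{2n-1-k}F_k(\rho^2 t)\,d\rho,\qquad F_k(v):=\int_{\mathbb S^{2n-1}}\mathsf{Tr}\,b_{-k}(v,\omega)\,d\omega.
\]
The substitution $\sigma=\rho\sqrt t$ recasts this as $\mathrm{Tr}\,B_{-k}(t)=t^{-(n-k/2)}\bigl(c_{-k,n}-E_k(t)\bigr)$, where $c_{-k,n}$ is given by \eqref{eq:c-Definition} and
\[
E_k(t)=(2\pi)^{-n}\int_0^{+\infty}\bigl(1-\chi(\sigma/\sqrt t)\bigr)\sigma^{2n-1-k}F_k(\sigma^2)\,d\sigma.
\]
Proposition \ref{prop:TimeDecrease-Diff} is decisive at this point: its vanishing estimate $F_k(v)=O(v^{j+h})$ as $v\to 0+$ simultaneously ensures absolute convergence of $c_{-k,n}$ near $\rho=0$ (overcoming the negative weight $\rho^{2n-1-k}$ when $k\ge 2n$) and, after rescaling $\tau=\sigma/\sqrt t$ and Taylor-expanding $F_k$ at the origin, produces a full asymptotic expansion $E_k(t)\sim\sum_{\ell\ge n+h/2}\alpha_{k,\ell}\,t^{\ell}$.

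Integrating $t^{s-1}$ term by term then yields
\[
I_k(s)=\frac{c_{-k,n}}{s-(n-k/2)}+\sum_{\ell\ge j+h}\frac{\beta_{k,\ell}}{s+\ell}+H_{k,\nu}(s),
\]
with $H_{k,\nu}$ holomorphic on $\{\mathrm{Re}\,s>n-\nu-1\}$. Summing over $0\le k\le 2\nu+1$ and dividing by $\Gamma(s)$ gives \eqref{eq:ZetaExpansion}, provided one recognises two cancellations: every parasitic pole at $s=-\ell$, $\ell\in\mathbb Z_{\geq 0}$, coincides with a pole of $\Gamma(s)$ and is killed by $\Gamma(s)^{-1}$; and the principal pole of $I_{2j}$ at $s=n-j\le 0$ (for $j\ge n$) is also a non-positive integer and similarly cancelled. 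At each such non-positive integer the principal contribution $c_{-2j,n}$ and all parasitic $\beta_{k,\ell}$ with $k/2+\ell=j$ combine into a single constant $C_{-2j}$, accounting for the different notation in \eqref{eq:ZetaExpansion}; this constant affects only the value, not the pole structure, of $\zeta_{A^{\mathrm w}}$. The main obstacle is the uniform control in $k$ of the asymptotic expansion of $E_k(t)$ together with the trace estimate on $R_K$: Proposition \ref{prop:TimeDecrease-Diff}, applied jointly to $b_{-k}$ and its radial derivatives, is the essential input that makes both estimates work and that places the poles exactly at the half-integers $n,n-\tfrac12,\ldots,\tfrac12,-\tfrac12,-\tfrac32,\ldots,n-\nu-\tfrac12$.
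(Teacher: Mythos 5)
Your overall route is the paper's own: the Mellin representation, the split at $t=1$, the scaling-plus-excision analysis of $\mathrm{Tr}\,B_{-k}(t)$ giving $\mathrm{Tr}\,B_{-2j-h}(t)=c_{-2j-h,n}\,t^{-(n-j)+h/2}+f(t)$ with $c_{-2j-h,n}$ exactly as in \eqref{eq:c-Definition}, the use of Proposition \ref{prop:TimeDecrease-Diff} both for convergence of $c_{-2j-h,n}$ at $\rho=0$ when $2j+h\ge 2n$ and for the vanishing of the smooth correction to order $j+h$, and the final reassembly in which the parasitic poles sit only at non-positive integers and are either merged into the $C_{-2j}$ or suppressed in $\zeta_{A^{\mathrm w}}$ by the zeros of $1/\Gamma$. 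This part of your computation is correct and is the same argument the paper runs (there the rescaling step is packaged in the auxiliary functions $f_{-2j-h}$ and the Taylor-expansion Lemma 7.2.3 of \cite{P1:2010}).

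There is, however, one step that fails as stated: your treatment of the remainder. You write $e^{-tA^{\mathrm w}}=\sum_{k=0}^{K}B_{-k}(t)+R_K(t)$ and claim that for $K=K(\nu)$ large the function $s\mapsto\int_0^1 t^{s-1}\mathrm{Tr}\,R_K(t)\,dt$ is holomorphic on $\{\mathrm{Re}\,s>n-\nu-1\}$ and hence part of $H_\nu$. No choice of $K$ gives this, because $R_K$ contains the smoothing error $R(t)=e^{-tA^{\mathrm w}}-U_A(t)$, whose trace is smooth up to $t=0$ but does not vanish there: since $b_{-k}|_{t=0}=0$ for $k\ge1$ and $B_0(0)=(\chi\,I_N)^{\mathrm w}$ (Remark \ref{rem:ConstructB_-j}), one has $R_K(0)=\bigl((1-\chi)I_N\bigr)^{\mathrm w}$ up to a smoothing term, so in general $\mathrm{Tr}\,R_K(t)\to\mathrm{Tr}\,R_K(0)\neq0$ as $t\to0+$. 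Consequently $\int_0^1 t^{s-1}\mathrm{Tr}\,R_K(t)\,dt$ has a simple pole at $s=0$, and further poles at the negative integers coming from the higher Taylor coefficients of $\mathrm{Tr}\,R_K$ at $t=0$; since $\nu\ge n$, all of these lie inside $\{\mathrm{Re}\,s>n-\nu-1\}$. Enlarging $K$ only improves the truncation error $U_A-\sum_{k\le K}B_{-k}$, not this contribution. The missing ingredient (supplied in the paper via the fact that $t\mapsto\mathrm{Tr}\,R(t)$ belongs to $\mathscr{S}(\overline{\mathbb{R}}_{+};\mathbb{C})$ together with Lemma 7.2.3 of \cite{P1:2010}) is to Taylor-expand $\mathrm{Tr}\,R_K(t)$ at $t=0$ to order $\nu$ and absorb the resulting simple poles at $s=0,-1,\dots,n-\nu$ into the unspecified constants $C_{-2j}$, $j=n,\dots,\nu$; in $\zeta_{A^{\mathrm w}}$ itself they are then invisible thanks to the zeros of $1/\Gamma$ at the non-positive integers. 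With this correction your proof closes and coincides with the paper's; note the oversight does not contaminate the formula \eqref{eq:c-Definition}, which concerns only the positive-integer and half-integer poles.
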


\begin{proof}

The proof follows they idea to make use of the asymptotic expansion
given by Lemma \ref{lem:Heat-Parametrix} to obtain an asymptotic
expansion for the continuation of $\zeta_{A^{{\rm w}}}.$To do that
we write $\zeta_{A^{{\rm w}}}$ by the Mellin transform which gives
it in terms of the heat-semigroup of $A^{\mathrm{w}}$. Now the semi-group
can be approximated via Lemma \ref{lem:Heat-Parametrix}. Hence, we
compute an approximation of $\zeta_{A^{{\rm w}}}$ whose asymptotic
terms, given by integrals, are the $\frac{c_{-2j-h,n}}{s-(n-j)+h/2}$
in (\ref{eq:ZetaExpansion}), obtained by a Taylor expansion argument.
Actually, we need the integrals defining the $c_{-2j-h,n}$ to converge.
That is why we use Proposition \ref{prop:TimeDecrease-Diff} to have
a control on the vanishing of the asymptotic terms of the parametrix
of the heat-semigroup as $t\rightarrow0+$. Finally, we take into
account the residuals given by the approximations made and we sum
their contributes. Namely, we notice that they do not affect the values
of the $c_{-2j-h,n}$ for $j\leq n-1$ and those for $h=1$ if $j\geq n.$ 

By the properties of the heat semi-group $0\leq t\rightarrow e^{-tA^{\mathrm{w}}}$
we may use the Mellin transform and write

\[
(A^{{\rm w}})^{-s}=\frac{1}{\Gamma(s)}\int_{0}^{+\infty}t^{s-1}e^{-tA^{{\rm w}}}\:dt,\,\,{\rm Re}s>2n/2=n,
\]

\noindent so that 
\[
s\mapsto\zeta_{A^{{\rm w}}}(s)={\rm Tr}(A^{{\rm w}})^{-s}=\frac{1}{\Gamma(s)}\int_{0}^{+\infty}t^{s-1}\mathrm{Tr}\:e^{-tA^{{\rm w}}}\:dt.
\]

\noindent Let hence $U_{A}\sim{\displaystyle \sum_{j\geq0}}B_{-j}\in{\rm {\rm OP}}S_{{\rm sreg}}(2,0)$
be the parametrix approximation of $e^{-tA^{{\rm w}}}$ constructed
in Lemma \ref{lem:Heat-Parametrix}. We write
\[
\zeta_{A^{{\rm w}}}(s)=\frac{1}{\varGamma(s)}\left(\int_{0}^{1}+\int_{1}^{+\infty}\right)t^{s-1}\mathrm{Tr}\:e^{-tA^{{\rm w}}}\,dt=:Z_{0}(s)+Z_{\infty}(s).
\]

\noindent In the first place we claim that $Z_{\infty}(s)$ is holomorphic
in $\mathbb{C}$. In fact, on the one hand, since $t\mapsto{\rm Tr}R(t)$
is rapidly decreasing for $t\rightarrow+\infty$ (where $R(t):=e^{-tA^{{\rm w}}}-U_{A}(t)$),
we have that for all $p\in\mathbb{N}$ and for all $t\geq1$
\[
|{\rm Tr}\:R(t)|\lesssim t^{-p}.
\]

\noindent On the other, given any $\nu\geq0$ and any symbol $b\in S(2,-2\nu)$,
we have (by definition of the class $S(\mu,\nu)$ at p. 79 of \cite{P1:2010})
that for all $t\geq1$ and all $p\in\mathbb{N}$
\begin{align*}
\left|(2\pi)^{-n}\int_{\mathbb{R}^{2n}}\mathsf{Tr}\:b(t,X)\,dX\right|= & \left|(2\pi)^{-n}\int_{0}^{+\infty}\int_{\mathbb{S}^{2n-1}}\mathsf{Tr}\:b(t,\rho\omega)\rho^{2n-1}\,d\omega\,d\rho\right|\\
= & t^{-p}\left|(2\pi)^{-n}\int_{0}^{+\infty}\int_{\mathbb{S}^{2n-1}}t^{p}\mathsf{Tr}\:b(t,\rho\omega)\rho^{2n-1}\,d\omega\,d\rho\right|\\
\lesssim & t^{-p}\int_{0}^{+\infty}\frac{\rho^{2n+1}}{(1+\rho)^{2\nu+2p}}\,d\rho\\
\lesssim & t^{-p}.
\end{align*}
\noindent (Here, we uses the polar coordinates $0\neq X=|X|{\displaystyle \frac{X}{|X|}}$
with $\rho\in\mathbb{R}_{+}$, $\omega\in\mathbb{S}^{2n-1}$, and
$d\omega$ is the induced Riemann measure on $\mathbb{S}^{2n-1}$.)
It thus follows that for all $p\in\mathbb{N}$ and for all $t\geq1$
\[
|{\rm Tr}\:U_{A}(t)|\lesssim t^{-p}.
\]
In conclusion, since
\[
{\rm Tr}\:e^{-tA^{{\rm w}}}={\rm Tr}\:U_{A}(t)+{\rm Tr}\:R(t),
\]
for every $p\geq1$ there exists $C_{p}>0$ such that
\[
|\mathrm{Tr}\:e^{-tA^{{\rm w}}}|\leq C_{p}t^{-p},\,\,\forall t\geq1,
\]
which proves the claim, since the term $1/\varGamma(s)$ is already
holomorphic in $\mathbb{C}$. Therefore the crux of the matter lies
in the study of the function $Z_{0}(s)$. To study it we need a better
understanding of the terms $\mathrm{Tr}\:B_{-2j-h}$, $j\geq0$, $h=0,1$.
Hence, we recall that by the homogeneity of the $b_{-2j-h}$, for
$t>0$, $j\geq0$, and $h=0,1$

\begin{align*}
\mathrm{Tr}\:B_{-2j-h}(t)= & (2\pi)^{-n}\int_{\mathbb{R}^{2n}}\chi(X)\mathsf{Tr}\left(b_{-2j-h}(t,X)\right)\,dX\\
= & (2\pi)^{-n}\int_{0}^{+\infty}\int_{\mathbb{S}^{2n-1}}\chi(\rho\omega)\mathsf{Tr}\left(b_{-2j-h}(t,\rho\omega)\right)\rho^{2n-1}\,d\omega\,d\rho\\
= & (2\pi)^{-n}\int_{0}^{+\infty}\int_{\mathbb{S}^{2n-1}}\chi(\rho\omega)\mathsf{Tr}\left(b_{-2j-h}(\rho^{2}t,\omega)\right)\rho^{2(n-j)-1-h}\,d\omega\,d\rho.
\end{align*}

\noindent We consider
\[
c_{-2j-h,n}:=(2\pi)^{-n}\int_{0}^{+\infty}\int_{\mathbb{S}^{2n-1}}\mathsf{Tr}\left(b_{-2j-h}(\rho^{2},\omega)\right)\rho^{2(n-j)-1-h}\,d\omega\,d\rho.
\]

\noindent We claim that
\[
|c_{-2j-h,n}|<+\infty,\,\,\forall j\in\mathbb{Z}_{+},\,h=0,1.
\]

\noindent In fact, the integral is convergent at $\rho=+\infty$
for all $j$ since $\mathsf{Tr}\left(b_{-2j-h}(\cdot,\omega)\right)$
is a Schwartz function, it is clearly convergent at $\rho=0$ for
$0\leq2j+h\leq2n-1$, and finally it is convergent at $\rho=0$ also
when $2j+h\geq2n$, for the singularity at $0$ of the factor $\rho^{2(n-j)-1-h}$
is compensated by $\mathsf{Tr}\left(b_{-2j-h}(t,\omega)\right)=O(t^{j+h})$
as $t\rightarrow0+$. 

We define now the function

\[
f_{-2j-h}(t):=-(2\pi)^{-n}\int_{0}^{1}\int_{\mathbb{S}^{2n-1}}(1-\chi(\rho\omega))\mathsf{Tr}\left(b_{-2j-h}(t,\rho\omega)\right)\rho^{2n-1}\,d\omega\,d\rho,\,\,j\in\mathbb{Z}_{+}.
\]
 Then $f_{-j^{'}}\in C^{\infty}([0,+\infty);\mathbb{C})$, for all
$j^{'}\in\mathbb{Z}_{+}$, and by Proposition \ref{prop:TimeDecrease-Diff}
\begin{equation}
f_{-2j-h}(t)=O(t^{j+h}),\,\,t\rightarrow0+.\label{eq:f_Decrease}
\end{equation}
 It follows that
\begin{equation}
\mathrm{Tr}\:B_{-2j-h}(t)=c_{-2j-h,n}t^{-(n-j)+h/2}+f_{-2j-h}(t)=c_{-2j-h,n}t^{-(n-j)+h/2}+O(t^{j+h}),\label{eq:TrB}
\end{equation}
as $t\rightarrow0+$, for all $j\geq0$, $h=0,1$, and that (by the proof of Proposition
3.2.15 at p. 32 of \cite{P1:2010} adapted to the present setting),
\[
\mathrm{Tr}\:U_{A}(t)-\sum_{h=0}^{1}\sum_{j=0}^{\nu}\mathrm{Tr}\:B_{-2j-h}(t)=:\mathrm{Tr}\:R_{2\nu+2}(t)=O(t^{\nu+1}),\,\,t\rightarrow0+,
\]
 $\forall\nu\in\mathbb{Z}_{+},\,h=0,1$.

\noindent However, the information contained in (\ref{eq:TrB}) alone
is not yet sufficient to obtain the continuation of $\zeta_{A^{{\rm w}}}$,
and we need a better control of $f_{-2j-h}$ . Notice that for all
$j,k\in\mathbb{Z}_{+}$, denoting $\partial_{t}^{k}f_{-2j-h}(t)$
by $f_{-2j-h}^{(k)}(t)$,
\[
f_{-2j-h}^{(k)}(t)=-(2\pi)^{-n}\int_{0}^{1}\int_{\mathbb{S}^{2n-1}}(1-\chi(\rho\omega))\mathsf{Tr}\left(\partial_{t}^{k}b_{-2j-h}(t,\rho\omega)\right)\rho^{2n-1}\,d\omega\,d\rho,
\]
so that $f_{-2j-h}^{(k)}(0)$ is finite and can be computed through
(\ref{eq:r-Definition-Diff}), and through the differential equations
(\ref{eq:Syst-b_0-Diff}), (\ref{eq:Syst-b_-1}), (\ref{eq:Syst-b_-2(j+1)}),
and (\ref{eq:eq:Syst-b_-2(j+1)-1}) used to construct the $b_{-2j-h}$.
Note, in particular, that 
\[
f_{0}^{(k)}(0)=(-1)^{k+1}(2\pi)^{-n}\int_{0}^{1}\int_{\mathbb{S}^{2n-1}}(1-\chi(\rho\omega))\mathsf{Tr}\left(a_{2}(\rho\omega)^{k}\right)\rho^{2n-1}\,d\omega\,d\rho.
\]
 We next apply Lemma 7.2.3 at p. 99 of \cite{P1:2010} to the functions
$f_{-2j-h}$, so that for any given $\nu\in\mathbb{Z}_{+}$ we may
write, by (\ref{eq:f_Decrease}), 
\[
F_{-2j-h}(s):=\int_{0}^{1}t^{s-1}f_{-2j-h}(t)\,dt=\sum_{k=0}^{\nu}\frac{f_{-2j-h}^{(j+h+k)}(0)}{(j+h+k)!}\frac{1}{s+j+h+k}+F_{-2j-h,\nu}(s),
\]
 where $F_{-2j-h,\nu}$ is holomorphic for ${\rm Re}s>-j-h-\nu-1$.

Using this in (\ref{eq:TrB}) we have that for each $j\geq0$, $h=0,1$,
for any given $\nu\in\mathbb{Z}_{+}$,
\begin{align*}
s\mapsto\int_{0}^{1}t^{s-1}\mathrm{Tr}\:B_{-2j-h}(t)\,dt= & \frac{c_{-2j-h,n}}{s-(n-j)+h/2}\\
 & +\left(\sum_{k=0}^{\nu}\frac{f_{-2j-h}^{(j+h+k)}(0)}{(j+h+k)!}\frac{1}{s+j+h+k}\right)+F_{-2j-h,\nu}(s),
\end{align*}
 where $F_{-2j-h,\nu}$ is holomorphic for ${\rm Re}s>-j-h-\nu-1$.

Analogously, since $0\leq t\mapsto\mathrm{Tr}\:R(t)\in\mathscr{Sb}(\mathbb{\overline{R}}_{+};\mathbb{C})$
and by Lemma 7.2.3 at p. 99 of \cite{P1:2010} we also have, with
$f_{R}(t):=\mathrm{Tr}\:R(t)$, that for any given $\nu\in\mathbb{Z}_{+}$
\[
\int_{0}^{1}t^{s-1}f_{R}(t)\,dt=\sum_{k=0}^{\nu}\frac{f_{R}^{(k)}(0)}{k!}\frac{1}{s+k}+F_{R,\nu}(s),
\]
 where $F_{R,\nu}$ is holomorphic for ${\rm Re}s>-\nu-1$.

We therefore obtain that for any given $\nu\in\mathbb{Z}_{+}$.
\begin{align*}
Z_{0}(s)= & \frac{1}{\Gamma(s)}\left[\left(\sum_{h=0}^{1}\sum_{j=0}^{\nu}\int_{0}^{1}t^{s-1}\mathrm{Tr}\:B_{-2j-h}(t)\:dt\right)\right.\\
 & +\int_{0}^{1}t^{s-1}\mathrm{Tr}\:R_{2\nu+2}(t)\:dt+\int_{0}^{1}t^{s-1}\mathrm{Tr}\:R(t)\:dt\Biggr]
\end{align*}
 Since the function $s\mapsto\int_{0}^{1}t^{s-1}{\rm Tr}R_{2\nu+2}(t)\,dt=:F_{2\nu+2}(s)$
is holomorphic for ${\rm Re}s>-\nu-1$, we thus obtain that, for any
given $\nu\in\mathbb{Z}_{+}$ with $\nu\geq n$,
\begin{align*}
Z_{0}(s)=\frac{1}{\varGamma(s)} & \left[\sum_{h=0}^{1}\sum_{j=0}^{\nu}\frac{c_{-2j-h,n}}{s-(n-j)+h/2}+\left(\sum_{h=0}^{1}\sum_{j,k=0}^{\nu}\frac{f_{-2j-h}^{(j+h+k)}(0)}{(j+h+k)!}\frac{1}{s+j+h+k}\right)\right.\\
 & +\left.\sum_{k=0}^{\nu}\frac{f_{R}^{(k)}(0)}{k!}\frac{1}{s+k}+\left(\sum_{h=0}^{1}\sum_{j=0}^{\nu}F_{-2j-h,\nu}(s)\right)+F_{R,\nu}(s)+F_{2\nu+2}(s)\right]\\
=\frac{1}{\varGamma(s)} & \Biggl[\left(\sum_{h=0}^{1}\sum_{j=0}^{n-1}\frac{c_{-2j-h,n}}{s-(n-j)+h/2}\right)+\left(\sum_{j=n}^{\nu}\frac{c_{-2j-1,n}}{s-(n-j)+1/2}\right)\\
 & +\left(\sum_{j=n}^{\nu}\frac{C_{-2j}}{s-(n-j)}\right)+\tilde{H}_{\nu}(s)\Biggr],
\end{align*}
 with $s\mapsto\tilde{H}_{\nu}(s)$ holomorphic for ${\rm Re}s>(n-\nu)-1$.
Since the function $1/\varGamma(s)$ is holomorphic in $\mathbb{C}$
and has zeros at the non-positive integers $-k,\,k\in\mathbb{Z}_{+}$,
this proves the theorem.

\end{proof}

\begin{remark}

An interesting problem can be to use the asymptotics for resolvent
expansions and trace regularizations by \cite{Hi-Po1:2003} and \cite{Hi-Po2:2002}.

\end{remark}

Theorem \ref{Zeta_Diff_Thm} has the following corollary for the Hurwitz-type
spectral zeta function of $A^{\mathrm{w}}$.

\begin{corollary}

Let $A=a_{2}+a_{1}+a_{0}$ be an elliptic system of second order where
$a_{j}$ is an $N\times N$  matrix-valued function on $\mathbb{R}^{2n}$
with homogeneous polynomial of degree $j$ entries for all $j=0,1,2$.
Moreover, suppose $A^{\mathrm{w}}>0$.

For all $\tau>0$ there exist constants $c_{-2j-h,n}$ with $0\leq j\leq n-1$,
$h=0,1$, and constants $c_{-2j-1,n}$, $C_{-2j}$ with $j\geq n$,
such that, for any given integer $\nu\in\mathbb{Z}_{+}$ with $\nu\geq n$,
\begin{align}
\zeta_{A^{{\rm w}}+\tau I}(s)= & \frac{1}{\varGamma(s)}\left[\left(\sum_{h=0}^{1}\sum_{j=0}^{n-1}\frac{c_{-2j-h,n}}{s-(n-j)+h/2}\right)+\left(\sum_{j=n}^{\nu}\frac{c_{-2j-1,n}}{s-(n-j)+1/2}\right)\right.\label{eq:ZetaExpansion-Hurwitz}\\
 & \left.+\left(\sum_{j=n}^{\nu}\frac{C_{-2j}}{s-(n-j)}\right)+H_{\nu}(s)\right]\nonumber 
\end{align}
where $\varGamma(s)$ is the Euler gamma function, and $H_{\nu}$
is holomorphic in the region ${\rm Re}s>(n-\nu)-1$. Consequently,
the spectral zeta function $\zeta_{A^{{\rm w}}}$ is meromorphic in
the whole complex plane $\mathbb{C}$ with at most simple poles at
$s=n$, $n-\frac{1}{2}$, $n-1$,$\ldots$ , $\frac{1}{2}$,$-\frac{1}{2}$,
$-\frac{3}{2}$, $...$, $n-\nu-\frac{1}{2}$. One has
\begin{align}
\hspace{-5mm}
& c_{-2j-h,n}= \nonumber \\
& (2\pi)^{-n}\int_{0}^{+\infty}\int_{\mathbb{S}^{2n-1}}\mathsf{Tr}\left(b_{-2j-h}(\rho^{2},
\omega)\right)\rho^{2(n-j)-1-h}\,d\omega d\rho\label{eq:c-Definition-Hurwitz}\\
 & -\tau(2\pi)^{-n}\int_{0}^{+\infty}\int_{\mathbb{S}^{2n-1}}\int_{0}^{\rho^{2}}e^{-(\rho^{2}-t^{'})a_{2}}
\mathsf{Tr}(b_{2-2j-h}(t^{'},\omega))\rho^{2(n-j)-1-h}\,dt^{'} d\omega d\rho \nonumber
\end{align}
where $0\leq j\leq n-1$, $h=0,1$ or $j\geq n$, $h=1$. In (\ref{eq:c-Definition-Hurwitz})
the $b_{-2j-h}$ are the terms in the symbol of the parametrix $U_{A}\in{\rm OP}S_{{\rm sreg}}(2,0)$
constructed in the proof of Lemma \ref{lem:Heat-Parametrix} and Remark
\ref{rem:ConstructB_-j},
\[
U_{A}\sim{\displaystyle \sum_{j\geq0}}B_{-j},
\]
where we set $b_{k}\equiv0$ for all $k=1$, $2$.

\end{corollary}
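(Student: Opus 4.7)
The plan is to reduce the corollary to Theorem \ref{Zeta_Diff_Thm} applied to the shifted operator $A^{\mathrm{w}} + \tau I$, and then to re-express the resulting pole coefficients in terms of the parametrix data of $A^{\mathrm{w}}$ itself, exploiting the fact that the scalar $\tau I$ is central.

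First, I would verify that $A + \tau I_N = a_2 + a_1 + (a_0 + \tau I_N)$ meets the hypotheses of Theorem \ref{Zeta_Diff_Thm}: the principal symbol $a_2$ is unchanged, so the system stays second-order and elliptic; self-adjointness is preserved; and strict positivity persists since $\tau > 0$ and $A^{\mathrm{w}} > 0$. Applying Theorem \ref{Zeta_Diff_Thm} to $A + \tau I$ then yields the meromorphic expansion \eqref{eq:ZetaExpansion-Hurwitz} with the claimed pole structure and a holomorphic remainder $H_\nu$ on $\{\mathrm{Re}\,s > (n-\nu) - 1\}$, the coefficients being determined by the parametrix $U_{A + \tau I}$ of $e^{-t(A^{\mathrm{w}} + \tau I)}$ produced by Lemma \ref{lem:Heat-Parametrix}.

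To obtain the explicit relation to the unshifted parametrix, I would invoke the operator identity
\[
e^{-t(A^{\mathrm{w}} + \tau I)} = e^{-t\tau}\, e^{-tA^{\mathrm{w}}},
\]
which follows from the commutation $[A^{\mathrm{w}}, \tau I] = 0$. Hence $e^{-t\tau} U_A(t)$ is itself a parametrix for $e^{-t(A^{\mathrm{w}}+\tau I)}$ modulo smoothing (scalar multiplication by $e^{-t\tau}$ preserves the smoothing ideal). Taylor-expanding $e^{-t\tau} = \sum_{k \geq 0} (-t\tau)^k/k!$ and observing that the monomial $(-t\tau)^k b_{-j}(t, X)$ is itself semiregular of order $-j - 2k$, as follows from the scaling $b_{-j}(t, \sigma X) = \sigma^{-j} b_{-j}(\sigma^2 t, X)$, I regroup by total semiregular order to get
\[
b^{\tau}_{-2J-h}(t, X) = \sum_{k=0}^{J} \frac{(-t\tau)^k}{k!}\, b_{-2(J-k)-h}(t, X).
\]
Inserting this into (\ref{eq:c-Definition}) applied to $A + \tau I$ then expresses $c^\tau_{-2J-h, n}$ as a polynomial in $\tau$ in the lower-order coefficients of the unshifted $A^{\mathrm{w}}$. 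The integral form \eqref{eq:c-Definition-Hurwitz} is next obtained by rewriting this polynomial through the Duhamel representation $b_{-j}(t, X) = -\int_0^t e^{-(t-s)a_2} r_{2-j}(s, X)\,ds$ from Lemma \ref{lem:Heat-Parametrix}, the $\tau$-correction being precisely the contribution propagated through the transport equation when $a_0$ is replaced by $a_0 + \tau I_N$; commutativity of scalar multiples of $a_2$ under exponentiation and cyclicity of $\mathsf{Tr}$ are what allow the resulting nested integrals to collapse to the stated single-time-convolution form. Under the convention $b_1 \equiv b_2 \equiv 0$ the $J = 0$ cases produce no correction, matching \eqref{eq:c-Definition-Hurwitz}.

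The hard part is the bookkeeping of this regrouping and the index matching between the polynomial-in-$\tau$ expression and the integral form in \eqref{eq:c-Definition-Hurwitz}. All the analytic inputs needed for the Mellin transform analysis carry over without modification from the proof of Theorem \ref{Zeta_Diff_Thm}: the extra scalar factor $e^{-t\tau} \leq 1$ only strengthens the decay as $t \to +\infty$, so the $\int_1^{+\infty}$ piece of the Mellin integral stays entire, while $e^{-t\tau} \to 1$ as $t \to 0^+$ ensures that the small-$t$ vanishing estimates of Proposition \ref{prop:TimeDecrease-Diff} still govern the behaviour of the $f_{-2j-h}$-type remainders and, hence, the pole/residue structure of $\zeta_{A^{\mathrm{w}}+\tau I}$.
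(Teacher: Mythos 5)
Your reduction of the corollary to Theorem \ref{Zeta_Diff_Thm} applied to $A+\tau I$ is exactly the paper's strategy, and your mechanism for relating the shifted parametrix to the unshifted one is different and, in itself, correct and clean: since $\tau I$ is scalar, $e^{-t\tau}U_A(t)$ is a parametrix for $e^{-t(A^{\mathrm{w}}+\tau I)}$, and regrouping the Taylor series of $e^{-t\tau}$ by parabolic homogeneity yields the exact identity $\tilde b_{-2J-h}(t,X)=\sum_{k=0}^{J}\frac{(-t\tau)^{k}}{k!}\,b_{-2(J-k)-h}(t,X)$, which one can check solves the perturbed transport equations (I verified it at order $3$ against the recursion of Lemma \ref{lem:Heat-Parametrix}). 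The paper instead works directly with the Duhamel form (\ref{eq:b-Definition-Hurwitz}) of those transport equations, where the shift enters through $\tilde r_{2-j}=r_{2-j}+\tau b_{2-j}$, and then substitutes into (\ref{eq:c-Def-Tilde-Hurwitz}). The pole structure, the holomorphy of $H_\nu$, and the applicability of Proposition \ref{prop:TimeDecrease-Diff} carry over as you say.

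The gap is your final step, where you assert that your polynomial-in-$\tau$ expression ``collapses'' by cyclicity of $\mathsf{Tr}$ to the affine-in-$\tau$ single-convolution formula (\ref{eq:c-Definition-Hurwitz}); it does not, and the asserted bookkeeping is precisely where the proof would fail. Concretely, take $j=1$, $h=1$ and write $t=\rho^{2}$: your $k=1$ term contributes $-t\tau\,\mathsf{Tr}\left(b_{-1}(t,\omega)\right)=\tau t^{2}\,\mathsf{Tr}\left(e^{-ta_{2}(\omega)}a_{1}(\omega)\right)$, whereas the correction in (\ref{eq:c-Definition-Hurwitz}) (reading the exponential inside the trace) equals $-\tau\int_{0}^{t}\mathsf{Tr}\left(e^{-(t-t')a_{2}}b_{-1}(t',\omega)\right)dt'=\frac{\tau}{2}\,t^{2}\,\mathsf{Tr}\left(e^{-ta_{2}(\omega)}a_{1}(\omega)\right)$, so the two differ by a factor $2$ whenever $\mathsf{Tr}(e^{-ta_{2}}a_{1})\not\equiv0$ (it vanishes in the JC examples, but not in general). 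Moreover, for larger $j$ in the stated range your exact coefficient is a polynomial of degree $j$ in $\tau$ (e.g.\ the term $\frac{(t\tau)^{2}}{2}b_{-h}$ in $\tilde b_{-4-h}$, whose trace integral is generically nonzero), while (\ref{eq:c-Definition-Hurwitz}) is affine in $\tau$, so no trace identity can reconcile the two beyond the leading case $b_{2-2j-h}=b_{0}$. Thus you must either keep your own (exact) expression for the coefficients, which proves a statement with a different formula than the one claimed, or follow the paper's route through (\ref{eq:b-Definition-Hurwitz}) and (\ref{eq:c-Def-Tilde-Hurwitz}) — noting that even there the lower-order terms feeding the $\tau$-correction are those of the \emph{shifted} parametrix, so identifying them with the unshifted $b_{2-2j-h}$ requires an argument that neither you nor the displayed formula supplies. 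As written, the derivation of (\ref{eq:c-Definition-Hurwitz}) is not established by your regrouping.
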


\begin{proof}

The proof follows from the demonstration of Theorem \ref{Zeta_Diff_Thm}.
In fact, we use of the equations in the proof of Lemma \ref{lem:Heat-Parametrix}
and (\ref{eq:b-Formula-Diff}) to link the asymptotic expansion of
the parametrix of the heat semi-group of $A^{{\rm w}}+\tau I$ to
the one of $A^{{\rm w}}$. Let $b_{j}$, $r_{2-j}$ be the terms constructed
in the proof of Lemma \ref{lem:Heat-Parametrix} (see also (\ref{eq:r-Definition-Diff})
and (\ref{eq:b-Formula-Diff})) for $A^{{\rm w}}$ and $\tilde{b}_{-j}$,
$r_{2-j}$ those for $A^{{\rm w}}+\tau I$. Then,
\begin{equation}
\begin{cases}
\tilde{b}_{0}(t,X)=e^{-ta_{2}(X)},\\
\tilde{b}_{1}(t,X)=\int_{0}^{t}e^{-(t-t^{'})a_{2}}r_{2-j}(t^{'},X)\,dt^{'}\\
\tilde{b}_{-j}(t,X)=-\int_{0}^{t}e^{-(t-t^{'})a_{2}}r_{2-j}(t^{'},X)\,dt^{'}-\tau\int_{0}^{t}e^{-(t-t^{'})a_{2}}b_{2-j}(t^{'},X)\,dt^{'},\:j\geq2,
\end{cases}\label{eq:b-Definition-Hurwitz}
\end{equation}
 since for all $j\geq2$
\[
\begin{cases}
\frac{d}{dt}\tilde{b}_{-j}+a_{2}\tilde{b}_{-j}=-\tilde{r}_{2-j}=-r_{2-j}-\tau b_{2-j},\\
\tilde{b}_{-j}|_{t=0}=0.
\end{cases}
\]

Now, we apply Theorem \ref{Zeta_Diff_Thm} to $\zeta_{A^{{\rm w}}+\tau I}$,
obtaining (\ref{eq:ZetaExpansion-Hurwitz}) with coefficients
\begin{equation}
c_{-2j-h,n}=(2\pi)^{-n}\int_{0}^{+\infty}\int_{\mathbb{S}^{2n-1}}\mathsf{Tr}\left(\tilde{b}_{-2j-h}(\rho^{2},\omega)\right)\rho^{2(n-j)-1-h}\,d\omega\,d\rho.\label{eq:c-Def-Tilde-Hurwitz}
\end{equation}
 Actually, substituting in (\ref{eq:c-Def-Tilde-Hurwitz}) the expressions
for $\tilde{b}_{-j}$ given by (\ref{eq:b-Definition-Hurwitz}), we
obtain (\ref{eq:c-Definition-Hurwitz}) which completes the proof.

\end{proof}

\section{\label{sec:Example} Examples}

\subsection{The meromorphic continuation of Jayne-Cumming model spectral zeta
function ($n=1$, $N=2$).}

The Jaynes-Cumming (JC) model is the model of a two-level atom in
one cavity, given by the $2\times2$ system in one real variable $x\in\mathbb{R}$
(see 3.1 in \cite{MaPa:2022})
\[
A^{\mathrm{w}}(x,D)=\alpha p_{2}^{\mathrm{w}}(x,D)I_{2}+\beta\Bigl(\boldsymbol{\sigma}_{+}\psi^{\mathrm{w}}(x,D)^{*}+\boldsymbol{\sigma}_{-}\psi^{\mathrm{w}}(x,D)\Bigr)+\gamma\boldsymbol{\sigma}_{3},\;\alpha>0,\beta,\gamma\in\mathbb{R},
\]
where $\psi(x,D):=\frac{x+\partial_{x}}{\sqrt{2}}$, $\boldsymbol{\sigma}_{\pm}:=\frac{1}{2}(\boldsymbol{\sigma}_{1}\pm i\boldsymbol{\sigma}_{2})$
with $\boldsymbol{\sigma}_{j},$ $j=0,\ldots,3$, the Pauli-matrices,
i.e. 
\[
\boldsymbol{\sigma}_{0}:=I_{2},\quad\boldsymbol{\sigma}_{1}=\left[\begin{array}{cc}
0 & 1\\
1 & 0
\end{array}\right],\quad\boldsymbol{\sigma}_{2}:=\left[\begin{array}{cc}
0 & -i\\
i & 0
\end{array}\right],\quad\boldsymbol{\sigma}_{3}:=\left[\begin{array}{cc}
1 & 0\\
0 & -1
\end{array}\right],
\]
and the atom levels are given by $\pm\gamma$. 

To apply Theorem \ref{Zeta_Diff_Thm} we need to compute the terms
$b_{-j}$ of the asymptotic expansion of the semi-group parametrix
construct in Lemma \ref{lem:Heat-Parametrix}. First of all, if $A$
is a the Hamiltonian of the JC model and, in the notations of the
previous sections, $A=a_{2}+a_{1}+a_{0}$, then 
\begin{equation}
a_{1}a_{0}=-a_{0}a_{1},\:a_{0}^{2}=I_{2},\text{ and }a_{1}^{2}=p_{2},.\label{eq:RulesJC}
\end{equation}
where $p_{2}$ is the hrmonic oscillator symbol. Hence, the product
of any number of factors equal to $a_{1}$ or $a_{0}$ can be rewritten
as the multiple (by a function in $C^{\infty}(\mathbb{R}_{t};C^{\infty}(\mathbb{R}^{2n}))$)
of $a_{1}$, $a_{0},$$a_{0}a_{1}$ or $I_{2}$ by using iteratively
the identities (\ref{eq:RulesJC}). This fact motivates the following
definition.

\begin{definition}

Given a linear combinations of products of any number of $a_{0}$
and $a_{1}$, we say that it is written in \textit{irreducible form}
if it is a linear combination of $a_{1}$, $a_{0}$, $a_{0}a_{1}$
and $I_{2}$ with coefficients in $C^{\infty}(\mathbb{R}_{t};C^{\infty}(\mathbb{R}^{2n}))$.

\end{definition}

We are going to prove a lemma determining the structure of the $b_{j}$
as linear combination with coefficients in $C^{\infty}(\mathbb{R}_{t};C^{\infty}(\mathbb{R}^{2n}))$
of $a_{1}$, $a_{0},$$a_{0}a_{1}$ and $I_{2}$.

\begin{lemma} \label{lem:JC_Parametrix}

Let $A=a_{2}+a_{1}+a_{0}$ be the Hamiltonian of the JC model with
$a_{j}$ homogeneous of degree $j$. Then, the $b_{-j}$ can be written
in irreducible form. Moreover, 
\begin{equation}
j\text{ odd }\Rightarrow\text{ the coefficients of }a_{0},\,I_{2}\text{ in the irreducible form of }b_{-j}\text{ are }0,\label{eq:JC_odd}
\end{equation}
\begin{equation}
j\text{ even }\Rightarrow\text{ the coefficients of }a_{1},\,a_{0}a_{1}\text{ in the irreducible form of }b_{-j}\text{ are }0.\label{eq:JC_even}
\end{equation}

\end{lemma}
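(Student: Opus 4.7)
The plan is to prove the lemma by strong induction on $j$, with a strengthened hypothesis: I will show that for every $j \geq 0$, $b_{-j}$ admits an irreducible form whose coefficients depend on $X$ only through the harmonic oscillator symbol $p_{2} = (x^{2}+\xi^{2})/2$ (and on $t$). This refinement is essential, since as we shall see the Poisson bracket $\{a_{1}, f\}$ only remains in irreducible form when the scalar $f$ depends on $X$ through $p_{2}$ alone.

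First I would record the needed algebraic facts. Since $a_{2} = \alpha p_{2} I_{2}$ is a scalar multiple of the identity, the propagator $e^{-(t-t')a_{2}}$ is a scalar function of $(t-t', p_{2})$ that commutes with every matrix, and hence preserves irreducible form, parity, and the $(t, p_{2})$-dependence of coefficients under both multiplication and integration in $t'$. Next, a direct check using $a_{0}^{2} = I_{2}$, $a_{1}^{2} = p_{2} I_{2}$, $a_{0} a_{1} = -a_{1} a_{0}$ shows that left multiplication by $a_{0}$ preserves parity (exchanging $I_{2} \leftrightarrow a_{0}$ and $a_{1} \leftrightarrow a_{0} a_{1}$), while left multiplication by $a_{1}$ flips parity (sending $I_{2}, a_{0}, a_{1}, a_{0} a_{1}$ to $a_{1}, -a_{0} a_{1}, p_{2} I_{2}, -p_{2} a_{0}$ respectively). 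The only new scalar factor introduced is $p_{2}$, which is itself a function of $p_{2}$, so the coefficient structure is preserved.

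The heart of the argument is the Poisson-bracket computation. Using $a_{2} = \alpha p_{2} I_{2}$ one obtains $\{a_{2}, a_{0}\} = 0$ and $\{a_{2}, a_{1}\} = (i\alpha/\gamma)\,a_{0} a_{1}$, and by iteration $\{a_{2}, \cdot\}_{(2)}$ behaves analogously; also $\{a_{1}, a_{0}\} = 0$, $\{a_{1}, a_{1}\} = 0$, while a short computation yields $\{a_{1}, a_{0} a_{1}\}$ proportional to $I_{2}$ and $\{a_{1}, p_{2}\}$ proportional to $a_{0} a_{1}$. Combining these with the (generalized, matrix-valued) Leibniz rule for Poisson brackets gives that $\{a_{2}, \cdot\}$ and $\{a_{2}, \cdot\}_{(2)}$ preserve parity, while $\{a_{1}, \cdot\}$ flips parity, and in both cases the result remains in irreducible form with $(t, p_{2})$-coefficients whenever the input is. With these tools in hand, the induction is immediate: the base cases $b_{0} = e^{-t\alpha p_{2}} I_{2}$ (even) and $b_{-1} = -t\,e^{-t\alpha p_{2}} a_{1}$ (odd) are explicit, and for $j \geq 2$ one inspects each of the five summands in (\ref{eq:r-Definition-Diff}) for $r_{2-j}$; each has parity $j \bmod 2$ and lies in irreducible form with $(t, p_{2})$-coefficients. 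Then (\ref{eq:b-Formula-Diff}) expresses $b_{-j}$ as an integral of $e^{-(t-t')a_{2}} r_{2-j}(t')$ in $t'$, which preserves all the required structure.

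The main obstacle I expect to wrestle with is the Poisson-bracket bookkeeping: matrix-valued brackets are not genuine derivations and pick up \emph{anomalous} sandwich terms of the shape $(\partial_{\xi} a)\,b\,(\partial_{x} c) - (\partial_{x} a)\,b\,(\partial_{\xi} c)$. Verifying that these terms always land back in the span of $\{I_{2}, a_{0}, a_{1}, a_{0} a_{1}\}$ with $p_{2}$-coefficients is where the specific algebraic structure of the Jaynes--Cummings Hamiltonian --- in particular the identities $\boldsymbol{\sigma}_{i} \boldsymbol{\sigma}_{j} - \boldsymbol{\sigma}_{j} \boldsymbol{\sigma}_{i} \propto \boldsymbol{\sigma}_{k}$ and the proportionality of $\{a_{1}, p_{2}\}$ to $a_{0} a_{1}$ --- must be exploited carefully.
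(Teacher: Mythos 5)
Your argument is correct in substance and follows the same inductive scheme as the paper (explicit base cases $b_{0}$, $b_{-1}$, then induction through the transport equations and the algebra (\ref{eq:RulesJC})), but it is actually more thorough than the paper's own proof. The paper writes the transport equation for $b_{-j}$ with right-hand side $-a_{0}b_{2-j}-a_{1}b_{1-j}$ only, silently dropping the three Poisson-bracket terms of (\ref{eq:r-Definition-Diff}); these do not vanish for the JC model (already $\{a_{2},b_{-1}\}\propto t\,e^{-t\alpha p_{2}}\,a_{0}a_{1}\neq 0$). Your handling of precisely those terms --- via $\{p_{2},a_{0}\}=0$, $\{p_{2},a_{1}\}\propto a_{0}a_{1}$, $\{p_{2},a_{0}a_{1}\}\propto a_{1}$, $\{a_{1},a_{0}a_{1}\}\propto I_{2}$, together with the scalar propagator $e^{-(t-t')a_{2}}$ --- is what makes the irreducible-form claim close under the full recursion (\ref{eq:b-Formula-Diff}). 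Moreover, the strengthened hypothesis that the coefficients depend on $X$ only through $p_{2}$ is exactly the right bookkeeping: without it, the sandwich terms $(\partial_{\xi}a_{1})(\partial_{x}f)M-(\partial_{x}a_{1})(\partial_{\xi}f)M$ land in the span of the constant matrices $\sigma_{1},\sigma_{2}$, which can be rewritten in terms of $a_{1},a_{0}a_{1}$ only at the cost of dividing by $p_{2}$, whereas for $f=g(t,p_{2})$ they recombine cleanly (e.g.\ $\{a_{1},g(p_{2})a_{0}\}\propto g'(p_{2})\,a_{1}$). So your route buys a proof that genuinely covers the full recursion, at the price of the extra $p_{2}$-dependence invariant; the paper's shorter proof relies on the truncated equations and, for the trace application, really only needs the weaker diagonal/off-diagonal parity statement (as in Lemma \ref{lem:JC-3_Parametrix}), which your parity analysis also delivers.

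One slip to correct: $\{a_{1},a_{1}\}$ is not $0$. For matrix-valued symbols $\{a_{1},a_{1}\}=[\partial_{\xi}a_{1},\partial_{x}a_{1}]=\tfrac{\beta^{2}}{2}[\sigma_{2},\sigma_{1}]\propto\sigma_{3}$, i.e.\ proportional to $a_{0}$. The error is harmless for your conclusion --- the extra term is diagonal, has a $(t,p_{2})$-coefficient, and has the parity your induction requires --- but it is precisely the kind of anomalous commutator term you flag in your last paragraph, so it should be stated correctly. Up to such normalization constants (the paper's (\ref{eq:RulesJC}) tacitly normalizes $\beta$ and $\gamma$), your proposal is sound.
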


\begin{proof}

The proof is by induction, follows the construction of the parametrix
in Lemma \ref{lem:Heat-Parametrix} and here we will use the same
notations of that lemma. First of all, 
\[
b_{0}(t,X)=e^{-tp_{2}(X)}I_{2},\quad b_{-1}(t,X)=-te^{-tp_{2}(X)}a_{1}
\]
(see also (\ref{eq:b-Formula-Diff})). Hence, $b_{0}$ and $b_{-1}$
are already written in irreducible form and satisfy (\ref{eq:JC_odd})
and (\ref{eq:JC_even}). Now, we suppose that for all $j^{'}\leq2j-1$
($j\geq2$) the thesis is verified and we want to prove the result
for $b_{2j}$ and $b_{2j+1}$. By the construction in Lemma \ref{lem:Heat-Parametrix}
and since $A$ is a differential operator (that is, its expansion
contains only terms with degree of homogeneity $\geq0$), 
\[
\begin{cases}
\frac{d}{dt}b_{-2j}+p_{2}b_{-2j}=-a_{0}b_{2-2j}-a_{1}b_{1-2j},\\
b_{-2j}|_{t=0}=0.
\end{cases}
\]
 Hence, since by inductive hypothesis
\begin{align*}
a_{0}b_{2-2j}+a_{1}b_{1-2j}= & a_{0}(f_{1}a_{0}+f_{2}I_{2})+a_{1}(g_{1}a_{1}+g_{2}a_{0}a_{1})\\
= & f_{1}a_{0}^{2}+f_{2}a_{0}+g_{1}a_{1}^{2}+g_{2}a_{1}a_{0}a_{1}\\
= & f_{1}I_{2}+f_{2}a_{0}-g_{1}p_{2}I_{2}-g_{2}p_{2}a_{0},
\end{align*}
where the third equality follows from (\ref{eq:RulesJC}) and where
the $f_{j}$ and $g_{j}$ are function in $C^{\infty}(\mathbb{R}_{t};C^{\infty}(\mathbb{R}^{2n}))$.
Hence, the claim is verified for $b_{-2j}$. Repeating the argument
for $b_{-2j-1}$, we have
\[
\begin{cases}
\frac{d}{dt}b_{-2j-1}+p_{2}b_{-2j-1}=-a_{0}b_{2-2j-1}-a_{1}b_{1-2j-1},\\
b_{-2j-1}|_{t=0}=0,
\end{cases}
\]
 which, since

\begin{align*}
a_{0}b_{1-2j}+a_{1}b_{-2j}= & a_{0}(\tilde{f}_{1}a_{1}+\tilde{f}_{2}a_{0}a_{1})+a_{1}(\tilde{g}_{1}a_{0}+\tilde{g}_{2}I_{2})\\
= & \tilde{f}_{1}a_{0}a_{1}+\tilde{f}_{2}a_{0}^{2}a_{1}+\tilde{g}_{1}a_{1}a_{0}+\tilde{g}_{2}a_{1}\\
= & \tilde{f}_{1}a_{0}a_{1}+\tilde{f}_{2}a_{1}-\tilde{g}_{1}a_{0}a_{1}+\tilde{g}_{2}a_{1},
\end{align*}
 shows that the claim is verified also for $b_{-2j-1}$ and completes
the proof.

\end{proof}

\begin{remark}

By Lemma \ref{lem:JC_Parametrix} we have that $\mathsf{Tr}\left(b_{-2j-1}\right)=0$
since it is a linear combination of matrices with zeros on the principal
diagonal. Hence, by (\ref{eq:JC_odd}) and (\ref{eq:c-Definition})
we have that 
\[
c_{-2j-1,1}=(2\pi)^{-1}\int_{0}^{+\infty}\int_{0}^{2\pi}\mathsf{Tr}\left(b_{-2j-1}(\rho^{2},\omega)\right)\rho^{-2j}\,d\omega\,d\rho=0,\:j\geq0,
\]
and
\begin{align*}
c_{0,1}= & (2\pi)^{-1}\int_{0}^{+\infty}\int_{0}^{2\pi}\mathsf{Tr}\left(b_{0}(\rho^{2},\omega)\right)\rho\,d\omega\,d\rho,\\
= & 2(2\pi)^{-1}\int_{0}^{+\infty}\int_{0}^{2\pi}e^{-\rho^{2}/2}\rho\,d\omega\,d\rho\\
= & 2\int_{0}^{+\infty}e^{-\rho^{2}/2}\rho\,d\rho=2.
\end{align*}
 Therefore, if $A$ is the JC Hamiltonian, by (\ref{eq:ZetaExpansion})
the spectral zeta function associated to $A^{\mathrm{w}}$ is 
\[
\zeta_{A^{\mathrm{w}}}(s)=\frac{1}{\varGamma(s)}\left[\frac{2}{s-1}+\left(\sum_{j=1}^{\nu}\frac{C_{-2j}}{s-(1-j)}\right)+H_{\nu}(s)\right],
\]
 where $\nu\geq1$, $H_{\nu}$ is holomorphic in the region ${\rm Re}s>-\nu$
and the $c_{-2,1}$, $C_{-2j}$ has been defined in Theorem \ref{Zeta_Diff_Thm}.
Consequently, the spectral zeta function $\zeta_{A^{{\rm w}}}$ is
meromorphic in the whole complex plane $\mathbb{C}$ with a simple
pole at $s=1$. Thus, $\zeta_{A^{\mathrm{w}}}$ has a meromorphic
continuation to $\mathbb{C}$.

\end{remark}

\subsection{The JC-model for one atom with $3$-level and one cavity-mode in
the so called \textit{$\Xi$-configuration}.}

This generalization of the JC model (that we will denote by $3$-$\Xi$-JCM)
describes a $3$-level atom in one cavity, given by the $3\times3$
system in one real variable $x\in\mathbb{R}^{2}$ (see 3.2 in \cite{MaPa:2022}).
In this configuration every level of energy can interact only with
the ones near to it, that is the electron can absorb (or emit) a photon
moving from the $j$th level of energy to the $j+1$st (or from the
$j+1$st level of energy to the $j$th) for $j=1$, $2$. That is
mathematically represented by the following Hamiltonian operator.
For $\alpha>0,$ $\beta_{1},\beta_{2}\in\mathbb{R}\setminus\{0\}$,
$\gamma_{1},\gamma_{2},\gamma_{3}\in\mathbb{R}$ with $\gamma_{1}<\gamma_{2}<\gamma_{3}$,
\begin{align*}
A^{\mathrm{w}}(x,D)= & \alpha p_{2}^{\mathrm{w}}(x,D)I_{3}+\frac{1}{2}\sum_{k=1}^{2}\beta_{k}\Bigl(\psi^{\mathrm{w}}(x,D)^{*}E_{k,k+1}+\psi^{\mathrm{w}}(x,D)E_{k+1,k}\Bigr)\\
 & +\sum_{k=1}^{3}\gamma_{k}E_{kk},
\end{align*}
with 
\[
E_{jk}:=e_{k}^{*}\otimes e_{j},\quad1\leq j,k\leq3
\]
forming the basis of the $3\times3$ complex matrices, where $E_{jk}$
acts on $\mathbb{C}^{3}$ as 
\[
E_{jk}w=\langle w,e_{k}\rangle e_{j},\quad w\in\mathbb{C}^{3},
\]
 and $\psi(x,D):=\frac{x+\partial_{x}}{\sqrt{2}}$.

\begin{lemma} \label{lem:JC-3_Parametrix}

Let $A=a_{2}+a_{1}+a_{0}$ be the Hamiltonian of the $3$-$\Xi$-JCM
with $a_{j}$ homogeneous of degree $j$. Then, 
\begin{equation}
j\text{ odd }\Rightarrow\text{the principal and secondary diagonal entries of }b_{-j}\text{ are }0,\label{eq:JC-3_odd}
\end{equation}
\begin{equation}
j\text{ even }\Rightarrow\text{the subdiagonal and superdiagonal entries of }b_{-j}\text{ are }0.\label{eq:JC-3_even}
\end{equation}

\end{lemma}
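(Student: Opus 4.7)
The plan is to prove both parts of the claim simultaneously by induction on $j$, by producing a $\mathbb{Z}/2$-grading on $\mathsf{M}_{3}$ that is preserved by the entire recursion yielding the $b_{-j}$. Set $\Sigma := \mathrm{diag}(1,-1,1)$ and define the involution $\epsilon(M) := \Sigma M \Sigma$ on $\mathsf{M}_{3}$. Its $+1$-eigenspace $\mathcal{E}_{+}$ is spanned by $E_{11}, E_{22}, E_{33}, E_{13}, E_{31}$ (the principal and secondary diagonal positions), while its $-1$-eigenspace $\mathcal{E}_{-}$ is spanned by $E_{12}, E_{21}, E_{23}, E_{32}$ (the sub- and super-diagonal positions). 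Extending $\mathcal{E}_{\pm}$ component-wise to $\mathsf{M}_{3}$-valued functions, the lemma is equivalent to the single assertion that $b_{-j} \in \mathcal{E}_{(-1)^{j}}$ for every $j \geq 0$.

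Since $\epsilon$ is an algebra involution, the decomposition is multiplicative: $\mathcal{E}_{a} \cdot \mathcal{E}_{b} \subseteq \mathcal{E}_{ab}$ for $a, b \in \{\pm 1\}$. Moreover $\Sigma$ is constant in $X$, so entry-wise partial derivatives in $X$ preserve $\mathcal{E}_{\pm}$; hence the iterated Poisson brackets $\{\cdot,\cdot\}_{(k)}$, being finite scalar-coefficient sums of products of derivatives of their two matrix-valued arguments, are graded multiplicatively in the same sense. From the explicit form of the $3$-$\Xi$-JCM Hamiltonian, $a_{2} = \alpha p_{2} I_{3}$ and $a_{0} = \sum_{k} \gamma_{k} E_{kk}$ are diagonal, hence in $\mathcal{E}_{+}$, whereas $a_{1} = \tfrac{1}{2}\sum_{k=1}^{2} \beta_{k}\bigl(\psi^{\mathrm{w}}(x,D)^{*} E_{k,k+1} + \psi^{\mathrm{w}}(x,D) E_{k+1,k}\bigr)$ is supported on the sub- and super-diagonals, hence in $\mathcal{E}_{-}$. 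The two base cases are then immediate from Lemma \ref{lem:Heat-Parametrix} and Remark \ref{rem:ConstructB_-j}: $b_{0}(t,X) = e^{-t p_{2}(X)} I_{3} \in \mathcal{E}_{+}$, and the formula $b_{-1}(t,X) = -\int_{0}^{t} e^{-(t-t')p_{2}(X)} a_{1}(X)\, dt'$ puts $b_{-1} \in \mathcal{E}_{-}$, the scalar factor $e^{-(t-t')p_{2}}$ lying in $\mathcal{E}_{+}$.

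For the inductive step, assume $b_{-j'} \in \mathcal{E}_{(-1)^{j'}}$ for every $j' < j$. Formula \eqref{eq:r-Definition-Diff} expresses $r_{2-j}$ as a sum of five terms whose parities, computed by the graded product rule, are respectively $(+)(-1)^{j-2}$ for $a_{0} b_{-(j-2)}$, $(-)(-1)^{j-1}$ for $a_{1} b_{-(j-1)}$, $(+)(-1)^{j-2}$ for each of $\{a_{2}, b_{-(j-4)}\}_{(2)}$ and $\{a_{2}, b_{-(j-2)}\}$, and $(-)(-1)^{j-3}$ for $\{a_{1}, b_{-(j-3)}\}$. All five evaluate to $(-1)^{j}$, so $r_{2-j} \in \mathcal{E}_{(-1)^{j}}$; since $b_{-j}(t) = -\int_{0}^{t} e^{-(t-t')p_{2}} r_{2-j}(t')\, dt'$ and the scalar-valued exponential is in $\mathcal{E}_{+}$, the induction closes with $b_{-j} \in \mathcal{E}_{(-1)^{j}}$. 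The only point that warrants separate verification is the graded multiplicativity of $\{\cdot,\cdot\}_{(k)}$ on matrix-valued symbols; this follows at once from its standard expression as a scalar-coefficient sum of terms $(\partial^{\alpha} f)(\partial^{\beta} g)$ and from the commutation $\partial^{\alpha}\epsilon = \epsilon\partial^{\alpha}$.
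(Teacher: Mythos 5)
Your proof is correct. It runs the same induction as the paper -- following the transport-equation recursion from Lemma \ref{lem:Heat-Parametrix}, with base cases $b_{0}$, $b_{-1}$ and the formula $b_{-j}=-\int_{0}^{t}e^{-(t-t')a_{2}}r_{2-j}(t')\,dt'$ -- but the parity bookkeeping is organized differently. The paper checks zero-patterns of the products entry by entry ($a_{0}$ diagonal times a matrix vanishing on the two diagonals, $a_{1}$ sub/superdiagonal times a matrix vanishing off them, etc.), and its displayed transport equations for this example keep only the terms $a_{0}b_{2-2j}+a_{1}b_{1-2j}$ on the right-hand side. You instead encode the two zero-patterns as the $\pm1$ eigenspaces of the involution $M\mapsto\Sigma M\Sigma$, $\Sigma=\mathrm{diag}(1,-1,1)$, so that the lemma becomes the single statement $b_{-j}\in\mathcal{E}_{(-1)^{j}}$ and all the needed closure properties (multiplicativity of the grading, invariance under $\partial_{X}$, hence under the brackets $\{\cdot,\cdot\}_{(k)}$) come for free. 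This buys two things: the full right-hand side $r_{2-j}$ of \eqref{eq:r-Definition-Diff}, including the Poisson-bracket terms $\{a_{2},\cdot\}$, $\{a_{2},\cdot\}_{(2)}$, $\{a_{1},\cdot\}$ which the paper's displayed recursion for this example suppresses, is handled uniformly (and one sees at a glance that those terms are harmless, precisely because derivatives commute with the conjugation); and the same device with $\Sigma=\mathrm{diag}(1,-1)$ reproves Lemma \ref{lem:JC_Parametrix}. The paper's entrywise argument is more elementary and self-contained, but less robust to the extra terms; your grading argument is the cleaner and slightly more complete version of the same induction.
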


\begin{proof}

Again the proof is by induction, follows the construction of the parametrix
in Lemma \ref{lem:Heat-Parametrix} and here we will use the same
notations of that lemma. First of all, 
\[
b_{0}(t,X)=e^{-tp_{2}(X)}I_{2},\quad b_{-1}(t,X)=-te^{-tp_{2}(X)}a_{1}.
\]
Hence, $b_{0}$ and $b_{-1}$ satisfy (\ref{eq:JC-3_odd}) and (\ref{eq:JC-3_even}).
Now, we suppose that for all $j^{'}\leq2j-1$ ($j\geq2$) the thesis
is verified and we want to prove the result for $b_{2j}$ and $b_{2j+1}$.
By the construction in Lemma \ref{lem:Heat-Parametrix} and since
$A$ is a differential operator
\[
\begin{cases}
\frac{d}{dt}b_{-2j}+p_{2}b_{-2j}=-a_{0}b_{2-2j}-a_{1}b_{1-2j},\\
b_{-2j}|_{t=0}=0.
\end{cases}
\]
 Therefore, by inductive hypothesis $a_{0}b_{2-2j}$ subdiagonal and
superdiagonal entries are $0$ since $a_{0}$ is a diagonal matrix.
Moreover, $a_{1}b_{1-2j}$ subdiagonal and superdiagonal entries are
$0$ since the principal and secondary diagonal entries of $b_{1-2j}$
are $0$. Hence, the claim is verified for $b_{-2j}$. Repeating the
argument for $b_{-2j-1}$, we have
\[
\begin{cases}
\frac{d}{dt}b_{-2j-1}+p_{2}b_{-2j-1}=-a_{0}b_{2-2j-1}-a_{1}b_{1-2j-1},\\
b_{-2j-1}|_{t=0}=0.
\end{cases}
\]
Thus, by inductive hypothesis $a_{0}b_{1-2j}$ has principal and secondary
diagonal entries that are $0$ since $a_{0}$ is diagonal. Moreover,
$a_{1}b_{-2j}$ principal and secondary diagonal entries are $0$
since $b_{-2j}$ diagonal entries are $0$. Hence, the claim is verified
also for $b_{-2j-1}$.

\end{proof}

\begin{remark}

By Lemma \ref{lem:JC-3_Parametrix} we have that $\mathsf{Tr}\left(b_{-2j-1}\right)=0$
since $b_{-2j-1}$ principal diagonal entries are $0$. Hence, by
(\ref{eq:JC-3_odd}) and (\ref{eq:JC-3_even}) we have that 
\[
c_{-2j-1,1}=(2\pi)^{-1}\int_{0}^{+\infty}\int_{0}^{2\pi}\mathsf{Tr}\left(b_{-2j-1}(\rho^{2},\omega)\right)\rho^{-2j}\,d\omega\,d\rho=0,\:j\geq0,
\]
and
\begin{align*}
c_{0,1}= & (2\pi)^{-1}\int_{0}^{+\infty}\int_{0}^{2\pi}\mathsf{Tr}\left(b_{0}(\rho^{2},\omega)\right)\rho\,d\omega\,d\rho,\\
= & 3(2\pi)^{-1}\int_{0}^{+\infty}\int_{0}^{2\pi}e^{-\rho^{2}/2}\rho\,d\omega\,d\rho\\
= & 3\int_{0}^{+\infty}e^{-\rho^{2}/2}\rho\,d\rho=3.
\end{align*}
 Therefore, if $A$ is the $3$-$\Xi$-JCM Hamiltonian, by (\ref{eq:ZetaExpansion})
the spectral zeta function associated to $A^{\mathrm{w}}$ is 
\[
\zeta_{A^{\mathrm{w}}}(s)=\frac{1}{\varGamma(s)}\left[\frac{3}{s-1}+\left(\sum_{j=1}^{\nu}\frac{C_{-2j}}{s-(1-j)}\right)+H_{\nu}(s)\right],
\]
where $\nu\geq1$, $H_{\nu}$ is holomorphic in the region ${\rm Re}s>-\nu$
and the $c_{-2,1}$, $C_{-2j}$ has been defined in Theorem \ref{Zeta_Diff_Thm}.
Consequently, the spectral zeta function $\zeta_{A^{{\rm w}}}$ is
meromorphic in the whole complex plane $\mathbb{C}$ with a simple
pole at $s=1$. Thus, $\zeta_{A^{\mathrm{w}}}$ has a meromorphic
continuation to $\mathbb{C}$.

\end{remark}

\section*{Acknowledgements}

I wish to thank Prof. Alberto Parmeggiani for helpful discussions.

\end{document}